\newtheorem{theorem}{Theorem}[section]
\newtheorem{lemma}[theorem]{Lemma}
\newtheorem{corollary}[theorem]{Corollary}
\newtheorem{proposition}[theorem]{Proposition}
\newtheorem{definition}[theorem]{Definition}
\newtheorem{remark}[theorem]{Remark}
\newtheorem{conjecture}[theorem]{Conjecture}
\numberwithin{equation}{section}
\newcounter{minutes}\setcounter{minutes}{\time}
\newcounter{hours}\setcounter{hours}{\time}
\newcommand{\D}{{\mathbb D}}
\newcommand{\real}{{\operatorname{Re}\,}}
\newcommand{\Log}{{\operatorname{Log}\,}}
\newcommand{\Arg}{{\operatorname{Arg}\,}}
\newcommand{\ds}{\displaystyle}
\begin{document}

\bibliographystyle{amsplain}

\title[Coefficient estimates for some classes of functions associated with \(q\)-function theory]%
{Coefficient estimates for some classes of functions associated with \(q\)-function theory}

\def\thefootnote{}
\footnotetext{ \texttt{\tiny File:~\jobname .tex,
          printed: \number\day-\number\month-\number\year,
          \thehours.\ifnum\theminutes<10{0}\fi\theminutes}
} \makeatletter\def\thefootnote{\@arabic\c@footnote}\makeatother

\author{Sarita Agrawal}
\thanks{
Discipline of Mathematics,
Indian Institute of Technology Indore,
Simrol, Khandwa Road, Indore 453 552, India\\
{\em Email: saritamath44@gmail.com}\\
{\bf Acknowledgement}: I thank my PhD supervisor Dr. Swadesh Kumar Sahoo
for his valuable suggestions and careful reading of this manuscript.
}

\begin{abstract}
In this paper, for every $q\in(0,1)$, we obtain the Herglotz representation theorem and discuss the Bieberbach type problem for the class of 
$q$-convex functions of order $\alpha, 0\le\alpha<1$. In addition, we discuss the Fekete-szeg\"o problem and the Hankel 
determinant problem for the class of $q$-starlike functions, leading to couple of  
conjectures for the class of $q$-starlike functions of order $\alpha, 0\le\alpha<1$.

\smallskip
\noindent
{\bf 2010 Mathematics Subject Classification}. 28A25; 30C45; 30C50; 33B10.

\smallskip
\noindent
{\bf Key words and phrases.} 
$q$-starlike functions of order $\alpha$, $q$-convex functions, Herglotz representation, Bieberbach's conjecture, the Fekete-szeg\"o problem, Hankel determinant.  
\end{abstract}

\maketitle
\pagestyle{myheadings}
\markboth{Sarita Agrawal}{Coefficient problems}

\section{Introduction}
Throughout the present investigation, we denote by $\mathbb{C}$, the set of complex numbers and by $\mathcal{H}(\D)$, the set of all analytic (or holomorphic) 
functions in $\D$. We use the symbol $\mathcal{A}$ for the class of functions 
$f \in \mathcal{H}(\D)$ with 
the standard normalization $f(0)=0=f'(0)-1$. i.e. the functions 
$f\in\mathcal{A}$ have the power series representation of the form 
\begin{equation}\label{e1}
f(z)=z+\sum_{n=2}^\infty a_nz^n.
\end{equation}
The set $\mathcal{S}$ denotes the class of {\em univalent} functions in $\mathcal{A}$.
We denote by $\mathcal{S}^*$ and $\mathcal{C}$, the class of starlike and convex
functions in $\mathcal{A}$ respectively. These are vastly available in the literature; 
see \cite{Dur83,Goo83}. The principal value of the logarithmic function $\log z$ for $z\neq 0$ is denoted by $\Log z:=\ln |z|+i \Arg(z)$, where $-\pi\le \Arg(z)<\pi$.

In geometric function theory, finding bound for the coefficient $a_n$ of functions of the form (\ref{e1}) is an important
problem, as it reveals the geometric properties of the corresponding function. For
example, the bound for the second coefficient $a_2$ of functions in the class $\mathcal{S}$, gives the growth and distortion properties as well as covering theorems. Bieberbach proposed a conjecture in the year $1916$ that 
{\em ``among all functions
in $\mathcal{S}$,
the Koebe function has the largest coefficient"}; for instance see \cite{Dur83,Goo83}.
This conjecture was a challenging open problem for mathematicians for
several decades. To prove this conjecture initial approach was made for some subclasses of univalent functions like $\mathcal{S}^*,\mathcal{C}$, etc. Many more new techniques were developed in order to settle the conjecture. One of the important techniques is the {\em Herglotz representation theorem} which tells about the integral representation of analytic functions with positive real part in $\D$. 
Finally, the complete proof of Bieberbach's conjecture was settled by de Branges in $1985$ \cite{deB85}. 

Another interesting coefficient estimation is the Hankel determinant. The $k^{th}$ order Hankel determinant ($k\ge 1$) of $f\in 
\mathcal{A}$ is defined by
$$H_k(n)=\left|\begin{array}{ccc}
a_n & a_{n+1} \cdots & a_{n+k-1}\\
a_{n+1} & \cdots & a_{n+k}\\
\vdots & \vdots &\vdots\\
a_{n+k-1} & \cdots & a_{n+2k-2}  
\end{array}
\right|.
$$
For our discussion, in this paper, we consider the Hankel determinant $H_2(1)$ (also called the Fekete-Szeg\"o functional) and $H_2(2)$.
Also in 1916, Bieberbach proved that if $f\in\mathcal{S}$, then $|a_2^2-a_3|\le 1$. In 1933, Fekete and Szeg\"o in \cite{FS33} proved that
$$|a_3-\mu a_2^2|\le \left \{ \begin{array}{ll}
4\mu-3 & \mbox{if } \mu \ge 1\\
1+2\exp [-2\mu/(1-\mu)] & \mbox{if } 0\le \mu \le 1\\
3-4\mu & \mbox{if } \mu \le 0
\end{array}\right. .
$$
The result is sharp in the sense that for each $\mu$ there is a function in
the class under consideration for which equality holds. 
The coefficient functional $a_3-\mu a_2^2$ has many applications in function theory. For example,
the functional $a_3-a_2^2$ is equal to $S_f(z)/6$, where $S_f(z)$ is the Schwarzian derivative 
of the locally univalent function $f$ defined by
$S_f(z)=(f''(z)/f'(z))'-(1/2)(f''(z)/f'(z))^2$.
Finding the maximum value of the functional $a_3-\mu a_2^2$ is called the
{\em Fekete-Szeg\"o problem}.
Koepf solved the Fekete-Szeg\"o problem for close-to-convex functions and obtains the largest real number $\mu$ for which $a_3-\mu a_2^2$ is maximized by the Koebe function $z/(1-z)^2$ is $\mu=1/3$ (see \cite{Koe87}). Later, in \cite{Koe87-II} (see also \cite{Lon93}), this result
was generalized for functions that are close-to-convex of order $\beta$, $\beta\ge 0$. In \cite{Pfl85}, Pfluger employed the variational method to give another treatment of the Fekete-Szeg\"o inequality which includes a description of the image domains under extremal functions. Later, Pfluger \cite{Pfl86} used Jenkin’s method to show that
for $f\in \mathcal{S}$,
$$|a_3-\mu a_2^2|\le 1+2|\exp(-2\mu/(1-\mu))|
$$
holds for complex $\mu$ such that $\real(1/(1-\mu))\ge 1$. The inequality is sharp if and only if $\mu$
is in a certain pear shaped subregion of the disk given by
$$\mu =1-(u+itv)/u^2+v^2, \quad -1\le t\le 1,
$$ 
where $u=1-\log(\cos \varphi)$ and $v=\tan \varphi- \varphi , 0<\varphi <\pi/2$.

In recent years, study of $q$-analogs of subclasses of univalent functions is well adopted among function
theorists. Bieberbach type problems for functions belonging to classes associated with $q$-function theory
are discussed in \cite{IMS90,AS14-2,SS15}. In the sequel, we discuss the Bieberbach type problem for $q$-analog of convex functions of order $\alpha,0\le \alpha<1$. 
Finding of Hankel determinant and Fekete-Szeg\"o problem for subclasses of univalent functions are vastly
available in literature, see, for instance \cite{KM69,Koe87, Koe87-II}. But these type of problems are not considered for 
classes involving $q$-theory. 
In this regard, we motivate to discuss the Hankel determinant and Fekete-Szeg\"o problems for the $q$-analog
of starlike functions.

\section{Preliminaries, and Main Theorems}\label{prelm}
For $0<q<1$, {\em the $q$-difference operator} (see \cite{AS14-2}), denoted as $D_qf$,  is defined by
the equation
$$(D_qf)(z)=\frac{f(z)-f(qz)}{z(1-q)},\quad z\neq 0, \quad (D_qf)(0)=f'(0).
$$
Now, recall the defintion of the class of {\em $q$-starlike functions of order $\alpha, 0\le\alpha<1$,} denoted by $\mathcal{S}_q^*(\alpha)$. 

\begin{definition}\cite[Definition~1.1]{AS14-2}
A function $f\in\mathcal{A}$ is said to be in the class $\mathcal{S}_q^*(\alpha)$, $0\le \alpha<1$, if 
$$\left|\frac{\ds\frac{z(D_qf)(z)}{f(z)}-\alpha}{1-\alpha}-\frac{1}{1-q}\right|\leq \frac{1}{1-q}, \quad z\in \mathbb{D}.
$$
\end{definition}

Note that
the choice $\alpha=0$ gives the definition of the class of $q$-starlike functions, denoted by $\mathcal{S}_q^*$, (see \cite[Definition~1.3]{IMS90}). Indeed, a function $f\in\mathcal{A}$ is said to belong to $\mathcal{S}_q^*$ if $|(z(D_qf)(z))/f(z)-1/(1-q)|\leq 1/(1-q),  z\in \mathbb{D}$.
By using the idea of the well-known Alexander's theorem \cite[Theorem~2.12]{Dur83}, Baricz and Swaminathan in \cite{BS14} defined a $q$-analog of convex functions, denoted by $\mathcal{C}_q$, in the following way.
\begin{definition}\cite[Definition~3.1]{BS14}
A function $f\in\mathcal{A}$ is said to belong to $\mathcal{C}_q$ if and only if $z(D_qf)(z)\in \mathcal{S}_q^*$.
\end{definition}
We call the functions of the class $\mathcal{C}_q$ as {\em $q$-convex functions}. The class $\mathcal{C}_q$ is non-empty as shown in \cite[Theorem~3.2]{BS14}. Note that as $q\to 1$, the classes 
$\mathcal{S}_q^*$ and $\mathcal{C}_q$ reduce to $\mathcal{S}^*$ and $\mathcal{C}$ respectively.

It is natural to define {\em the $q$-convex functions of order $\alpha$}, $0\le \alpha< 1$, denoted by 
$\mathcal{C}_q(\alpha)$, in the following way:
\begin{definition}\label{def}
A function $f\in\mathcal{A}$ is said to be in the class $\mathcal{C}_q(\alpha), 0\le \alpha<1$, if and only if $z(D_qf)(z)\in \mathcal{S}_q^*(\alpha)$.
\end{definition}
We can see that as $q\to 1$, the class $\mathcal{C}_q(\alpha)$ reduces to the class of convex functions of order $\alpha$, $\mathcal{C(\alpha)}$ (for definition of $\mathcal{C(\alpha)}$ see \cite{Goo83}).

Bieberbach type problem is estimated for the classes $\mathcal{S}_q^*$ and $\mathcal{S}_q^*(\alpha)$ in the articles \cite{IMS90} and \cite{AS14-2} respectively. 
But the Fekete-Szeg\"o problem and the Hankel determinant were not considered there.
In this article, we first discuss these two problems for the class $\mathcal{S}_q^*$ and posed two conjectures on the Fekete-Szeg\"o problem and Hankel determinant for the class $\mathcal{S}_q^*(\alpha)$. Since the Bieberbach type problem for the class $\mathcal{C}_q(\alpha), 0\le \alpha<1$, 
is not available in literature, here we obtain the Bieberbach type problem for the class $\mathcal{C}_q(\alpha)$ for $0\le \alpha<1$. In addition, we find the Herglotz representation formula for functions belonging to the class $\mathcal{C}_q(\alpha)$. One can also think of Hankel determinant, Fekete-Szeg\"o problems for $\mathcal{C}_q(\alpha)$ as well.   

The concept of $q$-integral is useful in this setting. 
Thomae was a pupil of Heine who introduced, the so-called, $q$-integral \cite{Tho69}
$$\int_0^1 f(t)\, d_q t = (1-q)\sum_{n=0}^{\infty} q^n f(q^n),
$$
provided the $q$-series converges.
In 1910, Jackson defined the general $q$-integral \cite{Jac10} (see also \cite{GR90,Tho69}) in the following manner:
$$\int_a^b f(t)\, d_q t:=\int_0^b f(t)\, d_q t - \int_0^a f(t)\, d_q t, 
$$
where
$$I_q(f(x)):=\int_0^x f(t)\, d_q t = x(1-q)\sum_{n=0}^{\infty} q^n f(xq^n),
$$
provided the $q$-series converges. Observe that
$$D_q I_q f(x)=f(x)~~\mbox{ and }~~
I_q D_q f(x)=f(x)-f(0),
$$
where the second equality holds if $f$ is continuous at $x=0$.
For more background on $q$-integrals, we refer to \cite{GR90}.

Now, we state our main results.
The Fekete-Szeg\"o problem for the class $\mathcal{S}_q^*$ is obtained as follows:
\begin{theorem}\label{T3}
Let $f\in\mathcal{S}_q^*$ be of the form (\ref{e1}) and $\mu$ be any complex number. Then
$$
|a_3-\mu a_2^2|\le \max\left\{\left|2(1-2\mu)\left(\frac{\ln q}{q-1}\right)^2+2\left(\frac{\ln q}{q^2-1}\right)\right|, 2\left(\frac{\ln q}{q^2-1}\right)\right\}.
$$
Equality occurs for the functions
\begin{equation}\label{e2}
F_1(z):=z\left\{\exp \left[\ds \sum_{n=1}^\infty \frac{2\ln q}{q^n-1}z^n\right]\right\}
\end{equation}
and
\begin{equation}\label{e3}
F_2(z):=z\left\{\exp \left[\ds \sum_{n=1}^\infty \frac{2\ln q}{q^{2n}-1}z^{2n}\right]\right\}.
\end{equation}
\end{theorem}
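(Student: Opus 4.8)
The plan is to reduce the estimate to the classical Fekete--Szeg\"o inequality for the Carath\'eodory class $\mathcal{P}=\{P\in\mathcal{H}(\D):P(0)=1,\ \real P>0\}$. The first ingredient is the representation underlying the class $\mathcal{S}_q^*$: a function $f\in\mathcal{A}$ lies in $\mathcal{S}_q^*$ if and only if $f$ is zero-free on $\D\setminus\{0\}$ and $f(qz)/f(z)=q^{P(z)}:=\exp\!\big(P(z)\log q\big)$ for some $P\in\mathcal{P}$. To see this, write $z(D_qf)(z)=\big(f(z)-f(qz)\big)/(1-q)$; then the defining inequality of $\mathcal{S}_q^*$ amounts to the requirement that $z(D_qf)(z)/f(z)$ be holomorphic in $\D$ with $\big|z(D_qf)(z)/f(z)-1/(1-q)\big|\le 1/(1-q)$, and holomorphy already forces $f(z)\neq 0$ on $\D\setminus\{0\}$ (a zero $z_1$ would have to be a zero of $z(D_qf)(z)$ as well, hence $f(q^nz_1)=0$ for all $n\ge 0$, and these points accumulate at $0$). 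With $\omega:=f(qz)/f(z)$ one has $z(D_qf)(z)/f(z)=(1-\omega)/(1-q)$, so the inequality becomes $|\omega|\le 1$, hence $|\omega|<1$ by the maximum principle; choosing the branch of $\log\omega$ with $\log\omega(0)=\log q$, the function $P:=\log\omega/\log q$ satisfies $P(0)=1$, $\real P>0$, and $\omega=q^{P}$. (This is essentially contained in \cite{IMS90}.)

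For the coefficient bookkeeping, write $P(z)=1+P_1z+P_2z^2+\cdots$ and expand $\omega(z)=q\exp\!\big((P_1z+P_2z^2+\cdots)\log q\big)$ to get $\omega_1=q(\log q)P_1$ and $\omega_2=q(\log q)P_2+\tfrac12q(\log q)^2P_1^2$. Comparing coefficients in $z(D_qf)(z)=\dfrac{1-\omega(z)}{1-q}\,f(z)$, i.e. in
\[
\sum_{n\ge 1}\frac{1-q^n}{1-q}\,a_nz^n=\Big(1-\frac{1}{1-q}\sum_{n\ge 1}\omega_nz^n\Big)\sum_{n\ge 1}a_nz^n,
\]
and using $\frac{1-q^2}{1-q}=1+q$, $\frac{1-q^3}{1-q}=1+q+q^2$, one finds $a_2=-\omega_1/\big(q(1-q)\big)$ and an analogous expression for $a_3$. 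Substituting the values of $\omega_1,\omega_2$ and writing $L:=\dfrac{\log q}{q-1}$, $M:=\dfrac{\log q}{q^2-1}$ (so $L=(1+q)M>0$), these collapse to $a_2=LP_1$ and $a_3=MP_2+\tfrac12L^2P_1^2$, whence
\[
a_3-\mu a_2^2=M\big(P_2-\nu P_1^2\big),\qquad \nu:=\frac{(2\mu-1)L^2}{2M}.
\]

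Now apply the Keogh--Merkes form of the Fekete--Szeg\"o inequality for $\mathcal{P}$ \cite{KM69}: for $P\in\mathcal{P}$ and any $\nu\in\IC$, $|P_2-\nu P_1^2|\le 2\max\{1,|2\nu-1|\}$, with equality for $P(z)=(1+z)/(1-z)$ when $|2\nu-1|\ge 1$ and for $P(z)=(1+z^2)/(1-z^2)$ when $|2\nu-1|\le 1$. Since $M>0$,
\[
|a_3-\mu a_2^2|\le 2M\max\{1,|2\nu-1|\}=\max\big\{2M,\ 2\,|L^2(2\mu-1)-M|\big\},
\]
and $2\,|L^2(2\mu-1)-M|=|2(1-2\mu)L^2+2M|$; written out in terms of $q$ this is exactly the asserted bound. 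For sharpness I trace the two extremal members of $\mathcal{P}$ back through the representation above: $P(z)=(1+z)/(1-z)$ gives $\omega(z)=q^{(1+z)/(1-z)}$, and solving the $q$-difference equation $f(qz)=f(z)\,q^{(1+z)/(1-z)}$ (take logarithms and use $(1+z)/(1-z)=1+2\sum_{n\ge 1}z^n$) returns precisely $F_1$ of \eqref{e2}; likewise $P(z)=(1+z^2)/(1-z^2)$ returns $F_2$ of \eqref{e3}. Both are in $\mathcal{S}_q^*$ since $\big|q^{(1+z)/(1-z)}\big|=\exp\!\big((\ln q)\,\real\tfrac{1+z}{1-z}\big)<1$ (because $\ln q<0$), and similarly for $F_2$.

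The one place that requires care is the representation in the first paragraph --- the equivalence of membership in $\mathcal{S}_q^*$ with $f(qz)/f(z)=q^{P(z)}$ for some $P\in\mathcal{P}$. The zero-freeness of $f$ is essential there: if one merely used that $\omega=f(qz)/f(z)$ is a self-map of $\D$ with $\omega(0)=q$ (without enforcing that $\omega$ be zero-free, equivalently that $P$ genuinely lie in $\mathcal{P}$) one would be led to a strictly larger, non-sharp constant. Granting the representation, the remainder is the Keogh--Merkes lemma together with the elementary computation above.
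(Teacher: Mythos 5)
Your argument is correct and follows essentially the same route as the paper: the Ismail--Merkes--Styer representation $f(qz)/f(z)=\exp\{(\ln q)p(z)\}$ with $p\in\mathcal{P}$, the resulting identities $a_2=p_1\ln q/(q-1)$ and $a_3=p_2\ln q/(q^2-1)+\tfrac12 p_1^2(\ln q/(q-1))^2$, and then the inequality $|p_2-\nu p_1^2|\le 2\max\{1,|2\nu-1|\}$ with the same extremal functions $F_1,F_2$. The only differences are cosmetic: you extract the coefficients by expanding $z(D_qf)(z)=\frac{1-\omega(z)}{1-q}f(z)$ rather than via the logarithm $\phi(z)=\Log(f(z)/z)$ as the paper does, and you explicitly invoke the complex-parameter form of the $\mathcal{P}$-class Fekete--Szeg\"o inequality, which is in fact needed here since $\mu$ is complex (the paper cites the real-parameter statement of Ma--Minda).
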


The next result is the estimation of second order Hankel determinant for the class $\mathcal{S}_q^*$.

\begin{theorem}\label{T4}
Let $f\in\mathcal{S}_q^*$ be of the form (\ref{e1}). Then
$$
|H_2(2)|=|a_2a_4-a_3^2|\le 4\left(\frac{\ln q}{q^2-1}\right)^2.
$$
Equality occurs for the function $F_2(z)$ defined in $(\ref{e3})$.
\end{theorem}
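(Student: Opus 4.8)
\emph{The plan is to} reduce the coefficient estimate to the coefficient theory of the Carath\'eodory class. First I would record that $f\in\mathcal{S}_q^*$ is equivalent to the subordination $\frac{z(D_qf)(z)}{f(z)}\prec\frac{1+z}{1-qz}$, because $\phi(z)=\frac{1+z}{1-qz}$ is the conformal map of $\mathbb{D}$ onto the disk $\{w:|w-\frac{1}{1-q}|<\frac{1}{1-q}\}$ normalized by $\phi(0)=1$. Representing the associated Schwarz function through a function $p$ of positive real part, $p(z)=1+p_1z+p_2z^2+\cdots$ with $|p_n|\le 2$, the subordination takes the convenient rational form
\[
\frac{z(D_qf)(z)}{f(z)}=\frac{2p(z)}{(1-q)p(z)+(1+q)}.
\]
This is the representation I would carry throughout, since it feeds directly into the Libera--Z\l otkiewicz machinery and degenerates correctly to the classical relation $\frac{zf'(z)}{f(z)}=p(z)$ as $q\to 1$.

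Second, I would expand both sides. Writing $(D_qf)(z)=\sum_{n\ge 1}[n]_q a_nz^{n-1}$ with $[n]_q=\frac{1-q^n}{1-q}$ and matching coefficients in $z(D_qf)(z)=f(z)\cdot\frac{z(D_qf)}{f}$, one solves recursively for $a_2,a_3,a_4$ as explicit polynomials in $p_1,p_2,p_3$ whose coefficients are rational in $q$ (the leading relation being $qa_2=\frac{1+q}{2}\,p_1$). Substituting into $H_2(2)=a_2a_4-a_3^2$ then produces an expression of the schematic shape
\[
a_2a_4-a_3^2=A(q)\,p_1p_3+B(q)\,p_2^2+C(q)\,p_1^2p_2+D(q)\,p_1^4,
\]
for explicit $q$-dependent $A,B,C,D$. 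Producing these four coefficients correctly is the routine but lengthy part, and it is where bookkeeping errors are easiest to make.

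Third---and this is the crux---I would estimate $|a_2a_4-a_3^2|$ over the coefficient body of the Carath\'eodory class. The standard device is to normalize $p_1=:c\in[0,2]$ (real, by rotation) and use $2p_2=c^2+(4-c^2)x$ together with $4p_3=c^3+2c(4-c^2)x-c(4-c^2)x^2+2(4-c^2)(1-|x|^2)\zeta$, where $|x|\le 1$ and $|\zeta|\le 1$. Applying the triangle inequality to discard the $\zeta$-term favorably collapses the problem to maximizing a function of $(c,|x|)$ on $[0,2]\times[0,1]$. The configuration $c=0$ (an even Schwarz function) is exactly the one yielding the extremal $F_2$ in $(\ref{e3})$ and the candidate value $4\big(\frac{\ln q}{q^2-1}\big)^2$. \emph{The hard part} is this final optimization: the $q$-dependence of $A,B,C,D$ makes the two-variable maximum genuinely $q$-sensitive, so one cannot simply argue by analogy with the $q\to 1$ starlike case (where both $F_1$ and $F_2$ give $|H_2(2)|=1$). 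I would expect the real difficulty to be controlling the signs and relative magnitudes of $A,B,C,D$ well enough to rule out interior points with $c\ne 0$, and this is precisely the step that must be handled with care rather than by limiting intuition.
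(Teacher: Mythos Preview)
Your parametrization and the paper's are genuinely different, and the difference is the crux. The paper does \emph{not} pass through the rational subordination $\frac{z(D_qf)}{f}=\frac{2p}{(1-q)p+(1+q)}$; it uses instead the exponential relation
\[
\frac{f(qz)}{f(z)}=\exp\{(\ln q)\,p(z)\},\qquad p\in\mathcal{P},
\]
coming from the bijection $\mathcal{S}_q^*\leftrightarrow B_q^0\leftrightarrow\mathcal{P}$ (Lemmas~\ref{l1} and~\ref{lm}). Writing $\Log(f(z)/z)=\sum_{n\ge 1}\phi_n z^n$ this yields $\phi_n=p_n\,\frac{\ln q}{q^n-1}$, and that is exactly where the $\ln q$ factors in the stated bound originate. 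From these $\phi_n$ one reads off $a_2,a_3,a_4$, and the Libera--Z\l otkiewicz optimization proceeds as you describe; the maximum sits at $c=0$ and equals $4\big(\frac{\ln q}{q^2-1}\big)^2$, attained by $F_2$.

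The gap in your plan is that your map $\mathcal{S}_q^*\to\mathcal{P}$ is injective but not surjective, so optimizing over all of $\mathcal{P}$ in your coordinates overshoots the sharp constant. Concretely, your own relations give $a_2=\frac{1+q}{2q}\,p_1$ and, at $p_1=0$, $a_3=\frac{p_2}{2q}$; hence $c=0$ with $|p_2|=2$ yields $|a_2a_4-a_3^2|=|a_3|^2=1/q^2$, which is \emph{strictly larger} than $4\big(\frac{\ln q}{q^2-1}\big)^2$ for every $q\in(0,1)$ (equivalently $1/q-q>-2\ln q$). The Schwarz function $\omega(z)=z^2$ behind that extremal choice gives $f(qz)/f(z)=(q-z^2)/(1-qz^2)$, which vanishes at $z=\sqrt{q}$; thus $f(q^{3/2})=0$ and this $f$ is not in $\mathcal{S}_q^*$ at all. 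So in your coordinates the configuration $c=0$ does \emph{not} correspond to $F_2$ and does not produce the value you claim; your scheme would at best prove a non-sharp upper bound and cannot reach the theorem as stated without additionally enforcing that the resulting $f$ be zero-free---which is precisely what the exponential parametrization encodes automatically.
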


\begin{remark}
For $q\to 1$, Theorem~\ref{T3} gives the Fekete-Szeg\"o problem
for the class $\mathcal{S}^*$ \cite[Theorem~1]{KM69}.
\end{remark}

\begin{remark}
For $q\to 1$, Theorem~\ref{T4} gives the Hankel determinant
for the class $\mathcal{S}^*$ \cite[Theorem~3.1]{Jan07}.
\end{remark}

Now we present the Herglotz representation of functions belonging to the class $\mathcal{C}_q(\alpha)$:

\begin{theorem}\label{thm2}
Let $f\in \mathcal{A}$. Then $f\in\mathcal{C}_q(\alpha)$, $0\le \alpha<1$, if and only if there exists 
a probability measure $\mu$ supported on the unit circle such that
$$\frac{z(D_qf)'(z)}{(D_qf)(z)}=\int_{|\sigma|=1}\sigma z F_{q, \alpha}^{'}(\sigma z)\rm{d}\mu(\sigma)
$$
where
\begin{equation}\label{MainThm1:eq1}
F_{q,\alpha}(z)=\ds \sum_{n=1}^\infty \frac{(-2)\left(\ln \frac{q}{1-\alpha(1-q)}\right)}{1-q^n}z^n, \quad z\in \D.
\end{equation}
\end{theorem}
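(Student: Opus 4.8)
The plan is to use Definition~\ref{def} to pass from $\mathcal C_q(\alpha)$ to $\mathcal S_q^*(\alpha)$, to recast the defining inequality as a one-sided bound on $\real\log(D_qf)$, and then to apply the classical Herglotz--Riesz representation of functions with nonnegative real part.

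First I would unpack the class. By Definition~\ref{def}, $f\in\mathcal C_q(\alpha)$ exactly when $g:=z(D_qf)(z)$ belongs to $\mathcal S_q^*(\alpha)$. Writing the defining inequality of $\mathcal S_q^*(\alpha)$ for $g$ and substituting $z(D_qg)(z)=(g(z)-g(qz))/(1-q)$, a short algebraic simplification turns it into $\bigl|g(qz)/g(z)-\alpha q\bigr|\le 1-\alpha$ for every $z\in\D$. A standard argument (a zero of $g$ in $\D\setminus\{0\}$ would propagate under $z\mapsto qz$ to an accumulation of zeros at the origin) shows $g$ is zero-free on $\D\setminus\{0\}$; since $g(z)/z=(D_qf)(z)$ with $(D_qf)(0)=1$, the analytic branch $G$ of $\log(D_qf)$ on $\D$ with $G(0)=0$ is well defined, and $\exp\bigl(G(qz)-G(z)\bigr)=(D_qf)(qz)/(D_qf)(z)=g(qz)/\bigl(q\,g(z)\bigr)$.

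Next I would turn the disc condition into a half-plane condition and invoke Herglotz--Riesz. Set $\lambda:=q/\bigl(1-\alpha(1-q)\bigr)$, so $0<q\le\lambda<1$ and $\ln\lambda<0$. Since $\overline{D(\alpha q,1-\alpha)}\subset\{\,|\zeta|\le\alpha q+(1-\alpha)=1-\alpha(1-q)=q/\lambda\,\}$, we get $|g(qz)/g(z)|\le q/\lambda$, hence $\real\bigl(G(qz)-G(z)\bigr)=\ln|g(qz)/g(z)|-\ln q\le -\ln\lambda$. Put $P(z):=\bigl(G(qz)-G(z)\bigr)/(2\ln\lambda)$; then $P$ is analytic on $\D$, $P(0)=0$, and (the inequality reversing because $2\ln\lambda<0$) $\real P(z)\ge-\tfrac12$, i.e.\ $2P+1$ has nonnegative real part with value $1$ at the origin. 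Herglotz--Riesz then gives a probability measure $\mu$ on $|\sigma|=1$ with $2P(z)+1=\int\frac{1+\sigma z}{1-\sigma z}\,d\mu(\sigma)$, that is $P(z)=\int\frac{\sigma z}{1-\sigma z}\,d\mu(\sigma)$.

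I would then identify $F_{q,\alpha}$ and close both directions. A term-by-term computation using $\frac{q^n-1}{1-q^n}=-1$ gives $F_{q,\alpha}(qz)-F_{q,\alpha}(z)=2\ln\lambda\cdot\frac{z}{1-z}$, so $\widetilde G(z):=\int F_{q,\alpha}(\sigma z)\,d\mu(\sigma)$ satisfies $\widetilde G(qz)-\widetilde G(z)=2\ln\lambda\int\frac{\sigma z}{1-\sigma z}\,d\mu(\sigma)=G(qz)-G(z)$; hence $\phi:=G-\widetilde G$ has $\phi(qz)=\phi(z)$ and $\phi(0)=0$, and iterating $\phi(z)=\phi(q^nz)\to\phi(0)=0$ forces $G\equiv\widetilde G$, i.e.\ $\log(D_qf)(z)=\int F_{q,\alpha}(\sigma z)\,d\mu(\sigma)$. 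Differentiating, multiplying by $z$, and using $z\,G'(z)=\frac{z(D_qf)'(z)}{(D_qf)(z)}$ together with $z\,\widetilde G'(z)=\int\sigma z\,F_{q,\alpha}'(\sigma z)\,d\mu(\sigma)$ yields the stated identity. For the converse I would run this backwards: given a probability measure $\mu$, set $D_qf:=\exp\widetilde G$ and $f:=I_q(D_qf)$, which lies in $\mathcal A$, and verify that the representation of $\widetilde G$ returns $g(qz)/g(z)$, $g=z(D_qf)$, to $\overline{D(\alpha q,1-\alpha)}$, so that $g\in\mathcal S_q^*(\alpha)$ and $f\in\mathcal C_q(\alpha)$. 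The main obstacle is precisely this last verification: the representation of $\widetilde G$ delivers immediately only the modulus estimate $|g(qz)/g(z)|=q\,\bigl|\exp\bigl(2\ln\lambda\int\frac{\sigma z}{1-\sigma z}\,d\mu(\sigma)\bigr)\bigr|\le q/\lambda=1-\alpha(1-q)$, which for $\alpha>0$ is strictly weaker than the disc membership $g(qz)/g(z)\in\overline{D(\alpha q,1-\alpha)}$ demanded by $\mathcal S_q^*(\alpha)$, so one cannot merely undo the modulus bound. Closing this gap requires controlling the \emph{argument} of $g(qz)/g(z)$, not just its modulus, through the boundary behaviour of $\int\frac{\sigma z}{1-\sigma z}\,d\mu(\sigma)$, so that the image is pinned to the smaller disc; for $\alpha=0$ the modulus bound \emph{is} the disc condition and the converse is immediate, so the substance of the theorem is concentrated in the order term $\alpha>0$.
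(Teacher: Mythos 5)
Your attempt is not the paper's argument, and as written it does not prove the theorem. The paper's own proof is a two-line reduction: by Definition~\ref{def}, $f\in\mathcal{C}_q(\alpha)$ iff $g(z):=z(D_qf)(z)\in\mathcal{S}_q^*(\alpha)$, and then the Herglotz representation for $\mathcal{S}_q^*(\alpha)$ is quoted from \cite[Theorem~1.1]{AS14-2}; since $zg'(z)/g(z)=1+z(D_qf)'(z)/(D_qf)(z)$, both directions of Theorem~\ref{thm2} are inherited from that cited equivalence. You instead try to reprove the companion result from first principles (via the ratio criterion of Theorem~\ref{sec2-thm1}, a logarithm, and Herglotz--Riesz). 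Your ``only if'' half is essentially correct: the zero-free argument for $g$, the inequality $\real\bigl(G(qz)-G(z)\bigr)\le-\ln\lambda$ with $\lambda=q/(1-\alpha(1-q))$, the identity $F_{q,\alpha}(qz)-F_{q,\alpha}(z)=2(\ln\lambda)\,z/(1-z)$, and the $q$-periodicity trick forcing $G\equiv\widetilde G$ are all sound. But the ``if'' direction is missing, and you say so yourself; so the proposal proves only half of an ``if and only if'' statement, and precisely the half that carries the $\alpha>0$ content.

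The obstacle you flag is genuine and cannot be removed within your framework. The representation $\log(D_qf)(z)=\int F_{q,\alpha}(\sigma z)\,d\mu(\sigma)$ is equivalent to $g(qz)/g(z)=(1-\alpha(1-q))\exp\{(\ln\lambda)p(z)\}$ with $p(z)=\int(1+\sigma z)/(1-\sigma z)\,d\mu(\sigma)\in\mathcal{P}$, which constrains only the modulus $|g(qz)/g(z)|\le 1-\alpha(1-q)$ and leaves the argument completely free; membership in $\mathcal{S}_q^*(\alpha)$, equivalently the criterion of Theorem~\ref{sec2-thm1}, requires the off-centre disc condition $|g(qz)/g(z)-\alpha q|\le 1-\alpha$. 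Already for the unit point mass (exactly the case invoked later for $E_q$ in Theorem~\ref{sec2-thm7}) one gets $g(qz)/g(z)=q\exp\{2(\ln\lambda)z/(1-z)\}$, whose range over $\D$ is the full punctured disc $\{w:0<|w|<1-\alpha(1-q)\}$, and this set is not contained in $\overline{D(\alpha q,1-\alpha)}$ once $\alpha q>0$. So no argument-control refinement of your converse can succeed: the converse is not a consequence of the representation plus elementary estimates, and its validity rests entirely on \cite[Theorem~1.1]{AS14-2}, which you neither cite nor reprove. To match the paper you should either invoke that theorem directly (as the paper does) or supply an independent verification that every function with the stated representation satisfies the criterion of Theorem~\ref{sec2-thm1}; your own computation shows the latter cannot be done by the modulus bound alone, so as it stands there is a genuine gap.
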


\begin{remark}
It is clear that when $q\to1$, 
$$F_{q, \alpha}^{'}(z)\to 2(1-\alpha)/(1-z) \mbox{ and } z(D_qf)'(z)/(D_qf)(z)\to zf''(z)/f'(z).
$$ 
Hence, when $q$ approaches to $1$, Theorem~\ref{thm2} leads to the Herglotz Representation of 
convex functions of order $\alpha$ (see for instance \cite[pp. 172, Problem~3]{Goo83}).
\end{remark}

The Bieberbach type problem for the class $\mathcal{C}_q(\alpha)$, $0\le \alpha<1$, is stated below: 

\begin{theorem}\label{sec2-thm7}
Let 

\begin{equation}\label{MainThm2:eq}
E_q(z):=I_q\{\exp [F_{q,\alpha}(z)]\}=z+\ds \sum_{n=2}^\infty\left(\frac{1-q}{1-q^n}\right) c_n z^n
\end{equation}
where $c_n$ is the $n$-th coefficient of the function $z\exp [F_{q,\alpha}(z)]$. 
Then $E_q\in \mathcal{C}_q(\alpha)$, $0\le \alpha<1$. Moreover, if $f(z)=z+\sum_{n=2}^\infty a_n z^n\in 
\mathcal{C}_q(\alpha)$, then $|a_n|\le((1-q)/(1-q^n)) c_n$ with equality holding for all $n$ if and only if 
$f$ is a rotation of $E_q$.
\end{theorem}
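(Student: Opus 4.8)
The plan is to read everything off the Herglotz representation of Theorem~\ref{thm2}, which turns membership in $\mathcal{C}_q(\alpha)$ into an explicit formula for $D_qf$. Write $\beta:=q/(1-\alpha(1-q))$, so that $F_{q,\alpha}(z)=\sum_{n\ge1}\frac{-2\ln\beta}{1-q^n}z^n$. The fact on which the whole argument rests is the elementary inequality $q\le\beta<1$, valid for $0\le\alpha<1$ and $0<q<1$; hence $-\ln\beta>0$, and consequently every Taylor coefficient of $F_{q,\alpha}$, of $zF_{q,\alpha}'(z)$, and of $\exp[F_{q,\alpha}(z)]$ is a strictly positive real number (the last because the $k$-th coefficient of $\exp\big[\sum_n d_nz^n\big]$ is a polynomial in $d_1,\dots,d_k$ all of whose coefficients are positive rationals). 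This positivity is the only place where $\alpha<1$ is used, and it is the reason $E_q$ is the extremal function.

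First I would settle $E_q\in\mathcal{C}_q(\alpha)$. Since $D_qI_q$ is the identity, $D_qE_q(z)=\exp[F_{q,\alpha}(z)]$, so $z(D_qE_q)'(z)/(D_qE_q)(z)=zF_{q,\alpha}'(z)=\int_{|\sigma|=1}\sigma zF_{q,\alpha}'(\sigma z)\,d\mu(\sigma)$ with $\mu$ the point mass $\delta_1$; Theorem~\ref{thm2} then gives $E_q\in\mathcal{C}_q(\alpha)$. The class is invariant under the rotations $f\mapsto\overline{\sigma}f(\sigma z)$, $|\sigma|=1$ — immediate from the definition since $z(D_qf)(z)/f(z)$ is rotation invariant, or from Theorem~\ref{thm2} with $\mu=\delta_\sigma$ — so every rotation $\overline{\sigma}E_q(\sigma z)$ lies in $\mathcal{C}_q(\alpha)$, and because the coefficients of $E_q$ are positive a short computation shows that each such rotation attains $|a_n|=((1-q)/(1-q^n))c_n$ for all $n$.

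Next, the bound. Let $f(z)=z+\sum_{n\ge2}a_nz^n\in\mathcal{C}_q(\alpha)$, let $\mu$ be the probability measure provided by Theorem~\ref{thm2}, and set $\gamma_n:=\int_{|\sigma|=1}\sigma^n\,d\mu(\sigma)$, so $|\gamma_n|\le1$. Integrating the Herglotz formula and using $D_qf(0)=1$ gives $D_qf(z)=\exp\big[\sum_{n\ge1}\frac{-2\ln\beta}{1-q^n}\gamma_nz^n\big]$. Write $D_qf(z)=\sum_{k\ge0}h_kz^k$ and $\exp[F_{q,\alpha}(z)]=\sum_{k\ge0}h_k^{*}z^k$ (the case $\gamma_n\equiv1$, in which every $h_k^{*}>0$). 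Since $h_k$ is the positive-coefficient partition polynomial of the previous paragraph evaluated at $d_n=\frac{-2\ln\beta}{1-q^n}\gamma_n$, while $h_k^{*}$ is the same polynomial at $d_n=\frac{-2\ln\beta}{1-q^n}>0$, and $|\gamma_n|\le1$, we get $|h_k|\le h_k^{*}$ for every $k$. As $h_{k-1}=((1-q^k)/(1-q))a_k$ (because $D_q(z^n)=\frac{1-q^n}{1-q}z^{n-1}$) and $h_{k-1}^{*}=c_k$ (the $k$-th coefficient of $z\exp[F_{q,\alpha}(z)]$), this is precisely $|a_k|\le((1-q)/(1-q^k))c_k$. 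One could shorten this step by invoking the Bieberbach-type theorem for $\mathcal{S}_q^{*}(\alpha)$ of \cite{AS14-2} applied to $zD_qf\in\mathcal{S}_q^{*}(\alpha)$, whose $k$-th coefficient is $((1-q^k)/(1-q))a_k$, but I keep the direct version because it also drives the equality analysis.

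Finally, the equality case. If $|a_k|=((1-q)/(1-q^k))c_k$ for all $k$, then $|h_k|=h_k^{*}$ for all $k$; for $k=1$, where $h_1=\frac{-2\ln\beta}{1-q}\gamma_1$ and $h_1^{*}=\frac{-2\ln\beta}{1-q}$, this forces $|\gamma_1|=1$. A probability measure on the unit circle whose first moment $\gamma_1$ has modulus one must be a point mass, so $\mu=\delta_{\sigma_0}$ with $\sigma_0=\gamma_1$; then $\gamma_n=\sigma_0^n$, $D_qf(z)=\exp[F_{q,\alpha}(\sigma_0z)]$, $h_k=\sigma_0^k h_k^{*}$, and hence $a_k=((1-q)/(1-q^k))\sigma_0^{k-1}c_k$, that is $f(z)=\overline{\sigma_0}E_q(\sigma_0z)$, a rotation of $E_q$; the converse was recorded above. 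I expect the only genuinely delicate points to be the positivity $\beta\in(0,1)$ — without it the term-by-term domination collapses — and the bookkeeping identifying $h_{k-1}^{*}$ first with the coefficient $c_k$ of $z\exp[F_{q,\alpha}]$ and then with the $q$-integrated coefficient of $E_q$; everything else is routine manipulation of $D_q$ and $I_q$.
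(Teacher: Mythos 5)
Your proposal is correct, but it takes a partly different route from the paper's. The paper handles the coefficient bound by a one-line reduction: $f\in\mathcal{C}_q(\alpha)$ means $z(D_qf)(z)=z+\sum_{n\ge2}\frac{1-q^n}{1-q}a_nz^n\in\mathcal{S}_q^*(\alpha)$, and it then quotes the Bieberbach-type theorem for $\mathcal{S}_q^*(\alpha)$ from \cite{AS14-2} to get $\bigl|\frac{1-q^n}{1-q}a_n\bigr|\le c_n$; the membership $E_q\in\mathcal{C}_q(\alpha)$ and the identification $b_n=\frac{1-q}{1-q^n}c_n$ are done exactly as you do (point-mass measure in Theorem~\ref{thm2} together with $z(D_qE_q)(z)=z\exp[F_{q,\alpha}(z)]$), while the equality-iff-rotation clause is essentially inherited from the cited result rather than argued in the text. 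You instead re-derive the bound inside this paper: integrating the Herglotz formula of Theorem~\ref{thm2} to obtain $D_qf(z)=\exp\bigl[\sum_{n\ge1}\tfrac{-2\ln\beta}{1-q^n}\gamma_n z^n\bigr]$ with moments $\gamma_n$ of the measure and $\beta=q/(1-\alpha(1-q))$, then exploiting the positivity of the coefficients of $F_{q,\alpha}$ (valid since $q\le\beta<1$ for $0\le\alpha<1$) and the positive partition polynomials of the exponential to dominate $|h_k|\le h_k^{*}$; and you make the equality analysis explicit via $|\gamma_1|=1\Rightarrow\mu=\delta_{\sigma_0}$. What this buys is a self-contained argument that does not lean on \cite{AS14-2} for the bound, plus an actual proof of the ``only if'' rotation statement (indeed equality at the single index $n=2$ already forces a rotation), which the paper's write-up leaves implicit. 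Two small points to tidy: the index slip ``for $k=1$'' (the relevant equality is $|h_1|=h_1^{*}$, i.e.\ the case $n=2$ of the coefficient bound), and the passage from the Herglotz formula to $\log D_qf$ tacitly uses that $D_qf$ is zero-free in $\D$, which is guaranteed by the product representation behind Lemma~\ref{lm3} (or can be avoided by comparing logarithmic derivatives of $D_qf$ and of the exponential).
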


\begin{remark}
It would be interesting to get an explicit form of the extremal function  independent of the $q$-integral
in Theorem~\ref{sec2-thm7}. 
\end{remark}

\begin{remark}
It is clear that when $q\to 1$, 
$$F_{q,\alpha}(z)\to -2(1-\alpha)\log(1-z),
$$ 
and hence $z\exp [F_{q,\alpha}(z)]\to z/(1-z)^{2(1-\alpha)}$. Therefore, as $q\to 1$, the coefficient $c_n\to {\prod_{k=2}^n (k-2\alpha)}/(n-1)!$, which gives $|a_n|$ is bounded by ${\prod_{k=2}^n (k-2\alpha)}/n!$ for $f\in \mathcal{C}(\alpha)$. i.e. when $q\to 1$, Theorem~\ref{sec2-thm7} leads to the Bieberbach type problem for the class $\mathcal{C}(\alpha)$
(see for instance \cite[Theorem~2, pp. 140]{Goo83}).
\end{remark}

\section{Properties of the class $\mathcal{C}_q(\alpha), 0\le \alpha<1$}\label{sec2}
This section is devoted to study of some basic properties of the class $\mathcal{C}_q(\alpha)$. The following proposition says that a function
$f\in\mathcal{C}_q(\alpha)$ can be written in terms of a function $g$ in $\mathcal{S}^*_q(\alpha)$. The proof is obvious and it follows
from the definition of $\mathcal{C}_q(\alpha)$.
 
\begin{proposition}\label{sec1-prop1}
Let $f \in \mathcal{C}_q(\alpha)$, $0\le \alpha<1$. Then there exists a unique function $g \in \mathcal{S}^*_q(\alpha)$, $0\le \alpha<1$, such that 
\begin{equation}\label{prop1-eqn}
g(z)=z(D_qf)(z)
\end{equation}
holds. Similarly, for a given function $g\in \mathcal{S}_q^*(\alpha)$ there exists 
a unique function $f\in \mathcal{C}_q(\alpha)$ satisfying $(\ref{prop1-eqn})$. 
\end{proposition}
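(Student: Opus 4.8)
The plan is to exploit the definition of $\mathcal{C}_q(\alpha)$ directly: by Definition~\ref{def}, $f\in\mathcal{C}_q(\alpha)$ means precisely that the function $g:=z(D_qf)(z)$ lies in $\mathcal{S}_q^*(\alpha)$, so the only real content is to check that $g$ is a well-defined element of $\mathcal{A}$ (in particular $g\in\mathcal{H}(\D)$ with the correct normalization) and that the correspondence $f\mapsto g$ is a bijection from $\mathcal{C}_q(\alpha)$ onto $\mathcal{S}_q^*(\alpha)$. First I would expand $f(z)=z+\sum_{n\ge 2}a_nz^n$ and compute $(D_qf)(z)=\sum_{n\ge 1}[n]_q a_n z^{n-1}$ where $[n]_q=(1-q^n)/(1-q)$ and $a_1=1$, so that $g(z)=z(D_qf)(z)=z+\sum_{n\ge 2}[n]_q a_n z^n$; since $[1]_q=1$, the normalization $g(0)=0$, $g'(0)=1$ is automatic, and $g$ is clearly analytic in $\D$, hence $g\in\mathcal{A}$. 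Membership $g\in\mathcal{S}_q^*(\alpha)$ is then literally the defining condition for $f\in\mathcal{C}_q(\alpha)$, so there is nothing further to prove for the forward direction.

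For the ``similarly'' part, given $g\in\mathcal{S}_q^*(\alpha)$ with $g(z)=z+\sum_{n\ge 2}b_nz^n$, I would recover $f$ by reading off its coefficients: equation~(\ref{prop1-eqn}) forces $[n]_q a_n=b_n$, i.e. $a_n=b_n/[n]_q=((1-q)/(1-q^n))\,b_n$ for $n\ge 2$ and $a_1=1$. Since $0<q<1$ we have $[n]_q\neq 0$ for every $n\ge 1$, so these coefficients are uniquely determined, and because $|a_n|\le|b_n|$ the series $f(z)=z+\sum_{n\ge 2}a_nz^n$ converges on $\D$ and defines an element of $\mathcal{A}$. Equivalently, and perhaps cleaner to state, one can write $f=I_q(g/z)$ using the $q$-integral operator $I_q$ introduced in Section~\ref{prelm}, since $D_q$ and $I_q$ are mutually inverse up to the value at $0$ and $f(0)=0$; the identity $D_qI_qh=h$ then gives $z(D_qf)(z)=g(z)$ directly. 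By construction $z(D_qf)(z)=g\in\mathcal{S}_q^*(\alpha)$, so $f\in\mathcal{C}_q(\alpha)$, and uniqueness follows from the uniqueness of the coefficients $a_n$ above.

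There is essentially no obstacle here; the statement is a formal bookkeeping consequence of the definitions, which is why the paper flags the proof as obvious. The only point meriting a sentence of care is the non-vanishing of $[n]_q$ for $0<q<1$, which guarantees both the invertibility of the coefficient relation and that $D_q$ acting on $\mathcal{A}$ does not collapse information; this is what makes the map $f\mapsto z(D_qf)(z)$ a bijection $\mathcal{C}_q(\alpha)\to\mathcal{S}_q^*(\alpha)$ rather than merely a well-defined map. I would therefore present the argument in three short steps: (i) the coefficient computation for $g=z(D_qf)$; (ii) the observation that $g\in\mathcal{A}$ and that $g\in\mathcal{S}_q^*(\alpha)\iff f\in\mathcal{C}_q(\alpha)$ by Definition~\ref{def}; (iii) the inverse construction $f=I_q(g/z)$ together with the coefficient identity $a_n=((1-q)/(1-q^n))b_n$ establishing existence and uniqueness.
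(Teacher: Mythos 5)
Your proposal is correct and follows essentially the same route as the paper, which simply notes that the statement is an immediate consequence of Definition~\ref{def}; you have merely filled in the routine details (the coefficient relation $a_n=((1-q)/(1-q^n))b_n$, equivalently $f=I_q(g/z)$, and the non-vanishing of $(1-q^n)/(1-q)$) that the paper leaves implicit. No gaps.
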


Next result is a characterization for a function to be in the class $\mathcal{C}_q(\alpha)$.
\begin{theorem}\label{sec2-thm1}
Let $f\in \mathcal{A}$. Then $f\in\mathcal{C}_q(\alpha)$, $0\le \alpha<1$, if and only if
$$\left|q\frac{(D_qf)(qz)}{(D_qf)(z)}-\alpha q\right|\leq {1-\alpha}, \quad z\in \mathbb{D}.
$$
\end{theorem}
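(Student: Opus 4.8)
The plan is to unwind the two definitions that connect $\mathcal{C}_q(\alpha)$ to a concrete inequality. By Definition \ref{def}, $f \in \mathcal{C}_q(\alpha)$ if and only if $g(z) := z(D_qf)(z) \in \mathcal{S}_q^*(\alpha)$. The definition of $\mathcal{S}_q^*(\alpha)$ then says this is equivalent to
$$
\left| \frac{\ds\frac{z(D_qg)(z)}{g(z)} - \alpha}{1-\alpha} - \frac{1}{1-q} \right| \le \frac{1}{1-q}, \qquad z \in \D .
$$
So the first step is purely computational: express $z(D_qg)(z)/g(z)$ in terms of $D_qf$. Writing $g(z) = z(D_qf)(z)$ and applying the $q$-difference operator, one has $(D_qg)(z) = (D_q[z\, D_qf(z)])(z)$; using the $q$-product rule (or simply expanding $(g(z)-g(qz))/(z(1-q))$ directly) gives $g(z) - g(qz) = z(D_qf)(z) - qz(D_qf)(qz)$, hence
$$
\frac{z(D_qg)(z)}{g(z)} = \frac{z(D_qf)(z) - qz(D_qf)(qz)}{z(1-q)\,(D_qf)(z)} \cdot z \Big/ \big(z(D_qf)(z)\big)^{-1}\!,
$$
which after cancellation simplifies to $\ds\frac{1}{1-q}\left(1 - q\,\frac{(D_qf)(qz)}{(D_qf)(z)}\right)$. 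I would present this simplification cleanly rather than in the messy intermediate form above.

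The second step is to substitute this expression into the $\mathcal{S}_q^*(\alpha)$ inequality and simplify the left-hand side. Setting $w := q\,(D_qf)(qz)/(D_qf)(z)$, the quantity inside the modulus becomes
$$
\frac{\frac{1}{1-q}(1-w) - \alpha}{1-\alpha} - \frac{1}{1-q}
= \frac{1}{(1-\alpha)(1-q)}\Big[(1-w) - \alpha(1-q) - (1-\alpha)\Big]
= \frac{-\,w + \alpha q}{(1-\alpha)(1-q)} \cdot (-1)^{?},
$$
and collecting terms carefully gives $\ds\frac{\alpha q - w}{(1-\alpha)(1-q)}$ up to sign. Taking modulus and multiplying through by $(1-\alpha)(1-q) > 0$, the inequality $\big|\cdots\big| \le \frac{1}{1-q}$ becomes exactly $|w - \alpha q| \le 1-\alpha$, i.e. $\big|q\,(D_qf)(qz)/(D_qf)(z) - \alpha q\big| \le 1-\alpha$, which is the claimed characterization. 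Since every step is an equivalence (division by the nonvanishing $g(z) = z(D_qf)(z)$ — note $(D_qf)(0)=f'(0)=1\neq 0$, and $g$ has no other zeros on $\D$ because $f\in\mathcal{S}_q^*(\alpha)$-type membership forces $z(D_qf)(z)/f(z)$ to avoid $0$, so $(D_qf)(z)\neq 0$), the two conditions are genuinely equivalent.

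The main obstacle is bookkeeping rather than conceptual: getting the algebraic simplification of $z(D_qg)(z)/g(z)$ right, and in particular verifying that $(D_qf)(z)$ is nonvanishing on $\D$ so that all the divisions are legitimate and the chain of equivalences does not break. I would handle the nonvanishing by remarking that membership of $g = z(D_qf)(z)$ in $\mathcal{S}_q^*(\alpha)$ already presupposes $g(z)/z = (D_qf)(z)$ stays in the disk of radius $1/(1-q)$ centered at $1/(1-q)$ after the affine normalization, which in particular keeps it away from $0$; alternatively one notes the characterization is being stated for $f$ for which the left-hand side makes sense, so the zero-set issue is not a real difficulty. Everything else is a direct unwinding of definitions.
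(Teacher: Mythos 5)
Your argument is correct and takes essentially the same route as the paper: both reduce the statement, via Definition~\ref{def}, to the membership $g(z)=z(D_qf)(z)\in\mathcal{S}_q^*(\alpha)$, and then use the identity $z(D_qg)(z)/g(z)=\bigl(1-q\,(D_qf)(qz)/(D_qf)(z)\bigr)/(1-q)$ to turn the defining inequality of $\mathcal{S}_q^*(\alpha)$ into the stated one. The only difference is that the paper delegates this second step to the citation \cite[Theorem~2.2]{AS14-2}, whereas you rederive that characterization by direct algebra (your final simplifications are right, though the garbled intermediate display for $z(D_qg)(z)/g(z)$ should be cleaned up as you yourself note).
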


\begin{proof}
By Definition~\ref{def}, we have $f\in\mathcal{C}_q(\alpha)$ if and only if $z(D_qf)(z)\in \mathcal{S}_q^*(\alpha)$. Then the result follows immediately from \cite[Theorem~2.2]{AS14-2}.
\end{proof}

\begin{corollary}
The class $\mathcal{C}_q(\alpha)$ satisfies the inclusion relation 
$$\bigcap_{q<p<1}\mathcal{C}_p(\alpha)\subseteq \mathcal{C}_q(\alpha)
~~\mbox{ and }~~
\bigcap_{0<q<1}\mathcal{C}_q(\alpha) = \mathcal{C}(\alpha).
$$ 
\end{corollary}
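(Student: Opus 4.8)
The plan is to split the corollary into three assertions and obtain each from the characterization of $\mathcal{C}_q(\alpha)$ in Theorem~\ref{sec2-thm1} (equivalently, from $z(D_qf)(z)\in\mathcal{S}_q^*(\alpha)$): (i) $\bigcap_{q<p<1}\mathcal{C}_p(\alpha)\subseteq\mathcal{C}_q(\alpha)$; (ii) $\bigcap_{0<q<1}\mathcal{C}_q(\alpha)\subseteq\mathcal{C}(\alpha)$; and (iii) $\mathcal{C}(\alpha)\subseteq\mathcal{C}_q(\alpha)$ for each fixed $q\in(0,1)$. Assertions (i) and (ii) are passages to a limit in the parameter; (iii) carries the real content. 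Granting all three, (i) is the asserted inclusion, while (ii) together with (iii) give the two containments of $\bigcap_{0<q<1}\mathcal{C}_q(\alpha)=\mathcal{C}(\alpha)$.

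For (i), let $f\in\bigcap_{q<p<1}\mathcal{C}_p(\alpha)$. For each fixed $\zeta\in\D$ the map $p\mapsto(D_pf)(\zeta)=(f(\zeta)-f(p\zeta))/(\zeta(1-p))$ is continuous (indeed real-analytic) on $(0,1)$ since $f$ is holomorphic on $\D$. Writing the inequality of Theorem~\ref{sec2-thm1} for $\mathcal{C}_p(\alpha)$ in the cleared form $|p(D_pf)(pz)-\alpha p(D_pf)(z)|\le(1-\alpha)|(D_pf)(z)|$ and letting $p\searrow q$ gives $|q(D_qf)(qz)-\alpha q(D_qf)(z)|\le(1-\alpha)|(D_qf)(z)|$ for all $z\in\D$. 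In particular $(D_qf)$ is zero-free on $\D$ (a zero at some $z_0\neq0$ would force $(D_qf)(q^nz_0)=0$ for all $n\ge0$, hence $(D_qf)(0)=f'(0)=1=0$), so dividing by $|(D_qf)(z)|$ recovers the inequality of Theorem~\ref{sec2-thm1} and $f\in\mathcal{C}_q(\alpha)$.

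For (ii), let $f\in\mathcal{C}_q(\alpha)$ for every $q\in(0,1)$. Using $(D_qf)(qz)=(D_qf)(z)-z(1-q)(D_q^2f)(z)$ one gets $q(D_qf)(qz)/(D_qf)(z)-\alpha q=q\bigl((1-\alpha)-(1-q)\Psi_q(z)\bigr)$ with $\Psi_q(z):=z(D_q^2f)(z)/(D_qf)(z)$, so the condition of Theorem~\ref{sec2-thm1} reads $|(1-\alpha)-(1-q)\Psi_q(z)|\le(1-\alpha)/q$; squaring and cancelling the common factor $1-q>0$ leaves $-2(1-\alpha)\real\Psi_q(z)+(1-q)|\Psi_q(z)|^2\le(1-\alpha)^2(1+q)/q^2$, and letting $q\to1^-$ (so $(D_q^2f)(z)\to f''(z)$ and $(D_qf)(z)\to f'(z)$) yields $\real\bigl(1+zf''(z)/f'(z)\bigr)\ge\alpha$ on $\D$. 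Here each $(D_qf)$ is zero-free as in (i), so $f'$ is zero-free by Hurwitz ($f'(0)=1$); thus $u(z):=\real\bigl(1+zf''(z)/f'(z)\bigr)$ is harmonic with $u\ge\alpha$ and $u(0)=1>\alpha$, whence $u>\alpha$ on $\D$ by the minimum principle and $f\in\mathcal{C}(\alpha)$.

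For (iii), fix $q\in(0,1)$ and $f\in\mathcal{C}(\alpha)$. From $f(z)-f(qz)=z\int_q^1f'(tz)\,dt$ we get $(D_qf)(z)=\frac1{1-q}\int_q^1f'(tz)\,dt$ and, after the substitution $s=qt$, $(D_qf)(qz)=\frac1{q(1-q)}\int_{q^2}^qf'(sz)\,ds$, so $q(D_qf)(qz)/(D_qf)(z)=\bigl(\int_{q^2}^qf'(sz)\,ds\bigr)\big/\bigl(\int_q^1f'(sz)\,ds\bigr)$. Thus $f\in\mathcal{C}_q(\alpha)$ is precisely the statement that this quotient of radial means of $f'$ lies, for every $z\in\D$, in the disk $\{w:|w-\alpha q|\le1-\alpha\}$, and I would prove it from $\real\bigl(1+\zeta f''(\zeta)/f'(\zeta)\bigr)>\alpha$ via the representation $\log f'(\zeta)=\int_0^\zeta(P(w)-1)/w\,dw$ with $\real P>0$, $P(0)=1$, which controls the ratios $f'(\zeta_1)/f'(\zeta_2)$ along the radius through $z$ and hence the two means. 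Equivalently, since $G:=zf'\in\mathcal{S}^*(\alpha)$ and $zD_qf(z)=\frac1{1-q}\int_q^1G(tz)/t\,dt$, assertion (iii) is the $q$-Alexander image of the corresponding inclusion for $\mathcal{S}^*(\alpha)$ in \cite{AS14-2}, and can be obtained by transporting that argument. I expect (iii) to be the main obstacle: (i) and (ii) are routine limits in $q$, whereas (iii) requires turning the pointwise classical differential condition into a single disk inequality uniform in $z\in\D$, and the integral-mean estimate this demands is the only genuinely delicate step.
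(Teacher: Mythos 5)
Your parts (i) and (ii) are correct, and in fact more careful than the paper, which disposes of both by simply letting $p\to q$ and $q\to 1$ in the defining condition. Your precautions are sound: passing to the limit in the cleared form $|p(D_pf)(pz)-\alpha p(D_pf)(z)|\le(1-\alpha)|(D_pf)(z)|$, the zero-free argument for $D_qf$ (zeros would propagate along $q^nz_0\to 0$ and contradict $(D_qf)(0)=1$), the identity $(D_qf)(qz)=(D_qf)(z)-z(1-q)(D_q(D_qf))(z)$, and the minimum-principle upgrade from $\real\bigl(1+zf''(z)/f'(z)\bigr)\ge\alpha$ to the strict inequality are all fine.

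The genuine gap is (iii), and you flag it yourself: the inclusion $\mathcal{C}(\alpha)\subseteq\mathcal{C}_q(\alpha)$ is only a plan, not a proof. You correctly reduce it, via $(D_qf)(z)=\frac1{1-q}\int_q^1f'(tz)\,dt$, to showing that the quotient $\bigl(\int_{q^2}^qf'(sz)\,ds\bigr)\big/\bigl(\int_q^1f'(sz)\,ds\bigr)$ lies in the disk $\{w:|w-\alpha q|\le 1-\alpha\}$ for all $z\in\D$, but the integral-mean estimate that would establish this is exactly the step you leave as ``I would prove it'' / ``can be obtained by transporting that argument,'' and it is precisely the substantive content of the equality $\bigcap_{0<q<1}\mathcal{C}_q(\alpha)=\mathcal{C}(\alpha)$. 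The paper takes a different, citation-based route for this direction: from $f\in\mathcal{C}(\alpha)$ it gets $zf'\in\mathcal{S}^*(\alpha)=\bigcap_{0<q<1}\mathcal{S}^*_q(\alpha)$ by \cite[Corollary~2.3]{AS14-2} and then appeals to the Alexander-type correspondence of Proposition~\ref{sec1-prop1}. Be aware, though, that even that reduction must ultimately relate $z(D_qf)(z)$ --- not $zf'(z)$ --- to $\mathcal{S}^*_q(\alpha)$ (applying Proposition~\ref{sec1-prop1} to $g=zf'$ yields some $h\in\mathcal{C}_q(\alpha)$ with $D_qh=f'$, which is not $f$), so the difficulty you identify as the main obstacle is real and is not circumvented by merely quoting the starlike result; your proposal locates the obstacle correctly but does not overcome it, so as written the proof of the corollary is incomplete.
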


\begin{proof} 
If $f\in\mathcal{C}_p(\alpha)$ for all $p\in (q,1)$, then as $p\to q$ we get $f\in\mathcal{C}_q(\alpha)$. Hence the inclusion 
$$\bigcap_{q<p<1}\mathcal{C}_p(\alpha)\subseteq \mathcal{C}_q(\alpha)
$$
holds.
Similarly, if $f\in\mathcal{C}_q(\alpha)$ for all $q\in (0,1)$, then as $q\to 1$ we get $f\in\mathcal{C}(\alpha)$. That is,
$$
\bigcap_{0<q<1}\mathcal{C}_q(\alpha) \subseteq \mathcal{C}(\alpha)
$$ 
holds. It remains to show that 
$$\mathcal{C}(\alpha)\subseteq \bigcap_{0<q<1}\mathcal{C}_q(\alpha).
$$
For this, we let $f\in \mathcal{C}(\alpha)$. Then we show that $f\in \mathcal{C}_q(\alpha)$ for all $q\in (0,1)$.
Since $f\in \mathcal{C}(\alpha)$, $zf'\in \mathcal{S}^*(\alpha)$. By \cite[Corollary~2.3]{AS14-2}, $\mathcal{S}^*(\alpha)=\cap_{0<q<1}\mathcal{S}^*_q(\alpha)$, it follows that $zf'\in \mathcal{S}^*_q(\alpha)$ for all $q\in(0,1)$. 

Thus, by Proposition~\ref{sec1-prop1}, there exists a unique $h\in \mathcal{C}_q(\alpha)$ satisfying the 
identity (\ref{prop1-eqn}) with $h(z)=f(z)$. The proof now follows immediately.
\end{proof}

We now define two sets and proceed to prepare some basic results which are being used 
to prove our main results as well. Define
$$B_q=\{g:g\in \mathcal{H}(\D),~g(0)=q \mbox{ and } g:\D \to \D\}
~~\mbox{ and }~~
B_q^0=\{g:g\in B_q \mbox{ and } 0\notin g(\D) \}.
$$

\begin{lemma}{\label{lm2}} \cite[Lemma~2.4]{AS14-2}
If $h\in B_q$ then the infinite product 
$\prod_{n=0}^\infty \{((1-\alpha)h(zq^n)+\alpha q)/q\}$ converges uniformly on compact subsets of $\D$.
\end{lemma}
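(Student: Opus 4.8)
The plan is to reduce the problem to the standard Weierstrass criterion for uniform convergence of infinite products, namely that $\prod_n (1+u_n(z))$ converges uniformly on a compact set $K$ provided $\sum_n \sup_{z\in K} |u_n(z)| < \infty$. Writing the $n$-th factor as
$$\frac{(1-\alpha)h(zq^n)+\alpha q}{q} = 1 + \frac{(1-\alpha)\bigl(h(zq^n)-q\bigr)}{q},$$
so that $u_n(z) = (1-\alpha)\bigl(h(zq^n)-q\bigr)/q$, it suffices to bound $\sum_{n\ge 0}\sup_{z\in K}|h(zq^n)-q|$ on every compact $K\subset\D$.

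First I would fix a compact set $K\subset\D$; then there is $r\in(0,1)$ with $K\subseteq \overline{\D}_r:=\{|z|\le r\}$, and since $0<q<1$ we also have $zq^n\in\overline{\D}_r$ for all $n\ge 0$ when $z\in\overline{\D}_r$. The key estimate comes from the Schwarz–Pick type control on $h$: because $h$ maps $\D$ into $\D$ with $h(0)=q$, the quantity $h(w)-h(0)=h(w)-q$ vanishes at $w=0$, so by the Schwarz lemma applied to $(h(w)-q)/(1-\bar q\,h(w))$ (or more simply by writing $h(w)-q = w\,\psi(w)$ with $\psi\in\mathcal H(\D)$ bounded on $\overline{\D}_r$ by some constant $M=M(r)$, via the maximum principle), we get $|h(w)-q|\le M|w|$ for $|w|\le r$. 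Applying this with $w=zq^n$ yields
$$\sup_{z\in K}|h(zq^n)-q| \le M\, r\, q^n,$$
and hence $\sum_{n\ge 0}\sup_{z\in K}|u_n(z)| \le \dfrac{(1-\alpha)Mr}{q}\sum_{n\ge 0}q^n = \dfrac{(1-\alpha)Mr}{q(1-q)}<\infty$, since $q\in(0,1)$ guarantees the geometric series converges.

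With this bound the Weierstrass criterion immediately gives uniform convergence of the product on $K$, and since $K$ was an arbitrary compact subset of $\D$, the product converges uniformly on compact subsets of $\D$, as claimed. I do not expect a genuine obstacle here; the only point requiring a little care is extracting the linear bound $|h(w)-q|\le M|w|$ from the hypotheses on $h$, which is where one uses that $h$ is bounded (it maps into $\D$) together with $h(0)=q$ — either the Schwarz lemma after composing with a disk automorphism, or the factorization $h(w)-q=w\psi(w)$ followed by the maximum modulus principle on $\overline{\D}_r$, does the job cleanly. Note that the factor $\alpha q$ and the division by $q$ only contribute the harmless constant $(1-\alpha)/q$, so the case $0\le\alpha<1$ is handled uniformly and no separate argument for $\alpha=0$ is needed.
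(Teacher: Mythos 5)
Your proof is correct. Note that the paper itself offers no proof of this lemma --- it is imported verbatim from \cite[Lemma~2.4]{AS14-2} --- and your argument (writing each factor as $1+u_n(z)$ with $u_n(z)=(1-\alpha)(h(zq^n)-q)/q$, extracting the linear bound $|h(w)-q|\le M|w|$ from $h(0)=q$ and $h(\D)\subseteq\D$ via the Schwarz lemma, and then summing the geometric series to invoke the Weierstrass criterion) is exactly the standard route taken in that reference and in the $\alpha=0$ prototype in \cite{IMS90}, so there is nothing essential to add.
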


\begin{lemma}{\label{lm3}}
If $h\in B_q^0$ then the infinite product $\prod_{n=0}^\infty \{((1-\alpha)h(zq^n)+\alpha q)/q\}$ converges 
uniformly on compact subsets of $\D$ to a nonzero function in $\mathcal{H}(\D)$ with no zeros. Furthermore, the function $f$ satisfying the relation
\begin{equation}\label{eq3}
z(D_qf)(z)=\frac{z}{\prod_{n=0}^\infty \{((1-\alpha)h(zq^n)+\alpha q)/q\}}
\end{equation}
belongs to $\mathcal{C}_q(\alpha)$ and $h(z)=\ds \left(q\frac{(D_qf)(qz)}{(D_qf)(z)}-\alpha q\right)/(1-\alpha)$.
\end{lemma}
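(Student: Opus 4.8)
The plan is to settle the infinite-product assertion first and then read $f$ off from the product. Since $B_q^0\subseteq B_q$, Lemma~\ref{lm2} already gives that $P(z):=\prod_{n=0}^{\infty}\{((1-\alpha)h(zq^n)+\alpha q)/q\}$ converges uniformly on compact subsets of $\D$; as each factor equals $1$ at $z=0$, the limit $P$ is analytic on $\D$ with $P(0)=1$, so $P\not\equiv 0$. The genuinely new point is that $P$ is zero-free. Writing the $n$-th factor as $1+a_n(z)$ with $a_n(z)=\tfrac{1-\alpha}{q}\bigl(h(zq^n)-q\bigr)$, and using $h(0)=q$ to write $h(w)-q=w\,k(w)$ with $k\in\mathcal{H}(\D)$, one gets $a_n(z)=\tfrac{1-\alpha}{q}\,zq^nk(zq^n)$, so that $\sum_{n\ge 0}|a_n(z)|\le\tfrac{(1-\alpha)|z|}{q(1-q)}\sup_{|w|\le|z|}|k(w)|<\infty$, locally bounded in $z$. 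By the standard theory of infinite products, $P(z_0)=0$ for some $z_0\in\D$ exactly when some factor vanishes there, i.e.\ $h(z_0q^n)=-\alpha q/(1-\alpha)$ for some $n\ge 0$; knowing that $h$ omits this value on $\D$ is the role of the hypothesis $h\in B_q^0$ (for $\alpha=0$ it is exactly the condition $0\notin h(\D)$), and then no factor vanishes, so $P$ has no zeros in $\D$.

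Next, I would set $(D_qf)(z):=1/P(z)$ — analytic on $\D$, nowhere zero, equal to $1$ at the origin — and put $f:=I_q(1/P)$. Then $f\in\mathcal{H}(\D)$ (the $q$-integral of a function analytic on $\D$ is again analytic), $f(0)=0$, and $(D_qf)(0)=1$, so $f\in\mathcal{A}$; moreover $D_qI_q=\mathrm{id}$ gives $z(D_qf)(z)=z/P(z)$, which is $(\ref{eq3})$, and since $(\ref{eq3})$ determines $D_qf$ and hence $f$ uniquely within $\mathcal{A}$, this is the function referred to in the statement. Replacing $z$ by $qz$ in the product and reindexing gives $P(qz)=\prod_{n\ge 1}\{((1-\alpha)h(zq^n)+\alpha q)/q\}$, so $P(z)/P(qz)$ collapses to the single factor $((1-\alpha)h(z)+\alpha q)/q$; combined with $(D_qf)(qz)/(D_qf)(z)=P(z)/P(qz)$ this gives
$$q\,\frac{(D_qf)(qz)}{(D_qf)(z)}-\alpha q=(1-\alpha)h(z).$$
Since $h\in B_q$ means $|h(z)|\le 1$ on $\D$, the left-hand side has modulus at most $1-\alpha$, so Theorem~\ref{sec2-thm1} gives $f\in\mathcal{C}_q(\alpha)$, and dividing the displayed identity by $1-\alpha$ recovers the stated formula for $h$.

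The one non-formal step is the zero-freeness of $P$ in the first paragraph; once that is in hand the passage to $f$ uses nothing beyond $D_qI_q=\mathrm{id}$, the one-factor collapse of $P(z)/P(qz)$, and $|h|\le 1$. So the whole argument hinges on the hypothesis $h\in B_q^0$ being exactly strong enough to keep every factor $((1-\alpha)h(zq^n)+\alpha q)/q$ away from $0$ on $\D$, which is the point I would be most careful about when writing out the details.
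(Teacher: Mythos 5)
Your argument is essentially the paper's own proof, only written out in more detail: convergence comes from Lemma~\ref{lm2}, then zero-freeness of the product, then the telescoping identity $q\,(D_qf)(qz)/(D_qf)(z)=(1-\alpha)h(z)+\alpha q$ (the paper obtains it as a limit of telescoping partial products, you as the one-factor collapse of $P(z)/P(qz)$, which is the same computation), and finally $|h|\le 1$ together with the characterization in Theorem~\ref{sec2-thm1}; your explicit construction $f=I_q(1/P)$ and the $\sum|a_n|$ estimate are just careful versions of steps the paper leaves implicit. The one point you flag is genuine but is handled even more lightly in the paper, whose proof simply says that $h\neq 0$ makes the product non-vanishing: for $\alpha>0$ a factor vanishes exactly where $h=-\alpha q/(1-\alpha)$, which is automatically excluded when $\alpha q/(1-\alpha)\ge 1$ (since $|h|<1$) but is not literally implied by $0\notin h(\D)$ otherwise, so your hedge points to a thinness shared with the original argument rather than a gap peculiar to your write-up.
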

\begin{proof}
The convergence of the infinite product is due to Lemma \ref{lm2}. 
Since $h\in B_q^0$, we have $h(z)\neq 0$ in 
$\D$ and the infinite product does not vanish in $\D$. Thus, the function $z(D_qf)(z)\in \mathcal{A}$ and
we find the relation
$$ q\frac{(D_qf)(qz)}{(D_qf)(z)}=q\lim_{k\to\infty}\prod_{n=0}^k\frac{(1-\alpha)h(zq^n)+\alpha q}{(1-\alpha)h(zq^{n+1})+\alpha q}
=(1-\alpha)h(z)+\alpha q.
$$
Since $h\in B_q^0$, we get $f\in \mathcal{C}_q(\alpha)$ and the proof of our lemma is complete.
\end{proof}

Let $\mathcal{P}$ be the family of all functions $p\in \mathcal{H}(\D)$ 
for which $\real \{p(z)\}\ge 0$ and
\begin{equation}\label{e6}
p(z)=1+p_1z+p_2z^2+\ldots
\end{equation}
for $z\in\D$.

\begin{lemma}\label{lm}\cite[Lemma~2.4]{IMS90}
A function $g\in B_q^0$ if and only if it has the representation
\begin{equation}\label{eq4}
g(z)=\exp\{(\ln q) p(z)\}, 
\end{equation}
where $p(z)$ belongs to the class $\mathcal{P}$.
\end{lemma}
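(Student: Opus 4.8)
The plan is to prove Lemma~\ref{lm} (the final statement) by a direct logarithmic change of variables that turns membership in $B_q^0$ into a statement about analytic functions with positive real part. Suppose first that $g$ has the representation $g(z)=\exp\{(\ln q)p(z)\}$ with $p\in\mathcal{P}$. Since $p$ is analytic in $\D$ and the exponential is entire, $g\in\mathcal{H}(\D)$; since $p(0)=1$ we get $g(0)=\exp(\ln q)=q$; and since $\exp$ never vanishes, $0\notin g(\D)$. It remains to check $g:\D\to\D$, i.e.\ $|g(z)|<1$. Writing $p(z)=u(z)+iv(z)$ with $u=\real\,p\ge 0$, we have $|g(z)|=\exp\{(\ln q)\,u(z)\}$, and because $\ln q<0$ for $q\in(0,1)$ and $u(z)\ge 0$, this is $\le 1$; strictness of the inequality for $z\in\D$ follows because if $|g(z_0)|=1$ for some interior $z_0$ then $u(z_0)=0$, forcing (by the minimum principle for the harmonic function $u$) $u\equiv 0$, hence $p$ constant equal to $1$ and $g\equiv q\in\D$, so in all cases $g(\D)\subseteq\D$. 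Thus $g\in B_q^0$.

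For the converse, assume $g\in B_q^0$. Because $g$ is analytic, nonvanishing, and simply connected domain $\D$ is its domain, there is a well-defined analytic branch of the logarithm: set $L(z)=\Log g(z)$ using the principal branch near $z=0$ (legitimate since $g(0)=q>0$ so $L(0)=\ln q$ is real) and continue it analytically; more robustly, $L$ is the unique analytic function on $\D$ with $e^{L}=g$ and $L(0)=\ln q$. Now define $p(z):=L(z)/\ln q$. Then $p\in\mathcal{H}(\D)$, $p(0)=1$, and $g(z)=\exp\{(\ln q)p(z)\}$ by construction. The one substantive point is $\real\,p(z)\ge 0$: we have $\real\,p(z)=\real\,L(z)/\ln q=\ln|g(z)|/\ln q$, and since $|g(z)|<1$ gives $\ln|g(z)|<0$ while $\ln q<0$, the quotient is positive. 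Hence $p\in\mathcal{P}$ and $g$ has the claimed representation.

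The main (and really only) obstacle is the existence of the analytic logarithm branch and its normalization: one must invoke that a nonvanishing analytic function on the simply connected domain $\D$ admits an analytic logarithm, and pin down the additive constant so that $p(0)=1$ rather than $p(0)=1+2\pi i k/\ln q$ for some integer $k$; this is exactly handled by demanding $L(0)=\ln q$, which is forced by $g(0)=q>0$ if we take the principal value, as in the paper's convention for $\Log$. Everything else—the positivity of $\real\,p$, the bound $|g|<1$, the boundary-value normalizations—is an immediate consequence of $\ln q<0$ together with the sign of $\ln|g|$, plus the minimum principle to upgrade $\le$ to $<$. I would present the forward direction in two lines and spend the bulk of the write-up on the clean construction of $p$ from $g$ in the converse.
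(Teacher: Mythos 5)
The paper offers no proof of this lemma at all—it is imported verbatim from \cite{IMS90}—so there is nothing internal to compare against; your logarithmic change of variables (take the analytic branch $L$ of $\log g$ on the simply connected domain $\D$, normalized by $L(0)=\ln q$, and set $p=L/\ln q$, with the forward direction being the trivial computation $|g|=\exp\{(\ln q)\real p\}$) is exactly the standard argument and is correct. One small repair in your forward direction: if $\real p$ vanished at an interior point, the minimum principle would force $\real p\equiv 0$, which simply contradicts $p(0)=1$ (it does not give ``$p$ constant equal to $1$'' as you wrote); either way the conclusion you need, $|g(z)|<1$ on $\D$, stands.
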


\begin{theorem}\label{thm1}
The mapping $\rho:\mathcal{C}_q(\alpha) \to B_q^0$ defined by
$$\rho(f)(z)=\left(q\frac{(D_qf)(qz)}{(D_qf)(z)}-\alpha q\right)/(1-\alpha)
$$
is a bijection.
\end{theorem}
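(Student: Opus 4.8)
\textbf{Proof proposal for Theorem~\ref{thm1}.}
The plan is to establish that $\rho$ is well-defined, injective, and surjective by assembling the pieces already prepared. First I would check that $\rho$ indeed maps $\mathcal{C}_q(\alpha)$ into $B_q^0$. Given $f\in\mathcal{C}_q(\alpha)$, set $g=\rho(f)$. Evaluating at $z=0$ and using $(D_qf)(0)=f'(0)=1$ gives $g(0)=(q-\alpha q)/(1-\alpha)=q$, so $g(0)=q$. Analyticity of $g$ on $\D$ follows once we know $(D_qf)(z)\neq 0$ on $\D$; this, together with the bound $g:\D\to\D$, is exactly the content of the characterization in Theorem~\ref{sec2-thm1}, which says $|q(D_qf)(qz)/(D_qf)(z)-\alpha q|\le 1-\alpha$, i.e. $|g(z)|\le 1$ on $\D$. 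A small argument (maximum principle, or noting $g$ omits the value obtained by pushing to the boundary) upgrades this to $g:\D\to\D$ into the open disk, and the fact that $(D_qf)(z)\neq0$ forces $g(z)\neq0$, so $0\notin g(\D)$ and $g\in B_q^0$.

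Next I would prove surjectivity. Given $h\in B_q^0$, Lemma~\ref{lm3} produces a function $f$ satisfying the relation (\ref{eq3}) with $z(D_qf)(z)$ equal to $z$ divided by the convergent nonvanishing infinite product, and that lemma asserts $f\in\mathcal{C}_q(\alpha)$ and moreover that $h(z)=\bigl(q(D_qf)(qz)/(D_qf)(z)-\alpha q\bigr)/(1-\alpha)=\rho(f)(z)$. Hence every $h\in B_q^0$ lies in the image of $\rho$, giving surjectivity directly.

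For injectivity, suppose $\rho(f_1)=\rho(f_2)=:h$ for $f_1,f_2\in\mathcal{C}_q(\alpha)$. Writing $\phi_i(z)=(D_qf_i)(z)$, the equality $\rho(f_i)=h$ says $\phi_i(qz)/\phi_i(z)=((1-\alpha)h(z)+\alpha q)/q$ for $i=1,2$, so $\phi_1(qz)/\phi_1(z)=\phi_2(qz)/\phi_2(z)$ on $\D$. Thus the ratio $\psi(z):=\phi_1(z)/\phi_2(z)$ (well-defined and analytic since both $\phi_i$ are nonvanishing) satisfies $\psi(qz)=\psi(z)$ for all $z\in\D$. Iterating gives $\psi(z)=\psi(q^n z)$ for every $n\ge0$, and letting $n\to\infty$ with $q^n z\to0$ yields, by continuity, $\psi(z)=\psi(0)=\phi_1(0)/\phi_2(0)=1/1=1$. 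Hence $(D_qf_1)(z)=(D_qf_2)(z)$ on $\D$, so $z(D_qf_1)(z)=z(D_qf_2)(z)$, and since both $f_i\in\mathcal{A}$ vanish at $0$, applying the inverse $q$-integral operator $I_q$ (using $I_qD_qf=f-f(0)=f$ for $f\in\mathcal{A}$) gives $f_1=f_2$. Equivalently, one may invoke Proposition~\ref{sec1-prop1}, which already records that $f\mapsto z(D_qf)(z)$ is a bijection $\mathcal{C}_q(\alpha)\to\mathcal{S}_q^*(\alpha)$, so equality of $z(D_qf_i)(z)$ forces $f_1=f_2$.

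The main obstacle I anticipate is the passage from the closed-disk estimate $|g(z)|\le 1$ furnished by Theorem~\ref{sec2-thm1} to the open-disk statement $g(\D)\subseteq\D$ required by the definition of $B_q$, together with ruling out $g\equiv$ a unimodular constant; this is handled by the open mapping theorem together with the normalization $g(0)=q<1$, but it is the one place where a genuine (if short) argument beyond bookkeeping is needed. Everything else is a direct application of Lemma~\ref{lm3}, Theorem~\ref{sec2-thm1}, and Proposition~\ref{sec1-prop1}.
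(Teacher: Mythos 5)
Your surjectivity and injectivity arguments are sound and, in substance, the same mechanism the paper uses: the paper exhibits the inverse map $\sigma$ explicitly via the infinite product of Lemma~\ref{lm3} and checks $(\rho\circ\sigma)(h)=h$ and $(\sigma\circ\rho)(f)=f$, the latter by a telescoping-product computation, while you split this into surjectivity (Lemma~\ref{lm3} directly) and injectivity (the relation $\psi(qz)=\psi(z)$ for $\psi=(D_qf_1)/(D_qf_2)$, then $q^nz\to0$). Your $\psi$-argument is the same recovery of $D_qf$ from $\rho(f)$ that drives the paper's telescoping, and it is fine once you either justify the asserted nonvanishing of $D_qf_i$ or simply run the argument on a neighbourhood of the origin (where $D_qf_i(0)=1$) and finish with the identity theorem.

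The genuine gap is in the well-definedness step, and it is not the max-modulus point you flag (that part is fine: $|g|\le1$ plus $g(0)=q<1$ rules out a unimodular constant). The problem is the claim that $(D_qf)(z)\neq0$ forces $\rho(f)(z)\neq0$. Unwinding the definition, $\rho(f)(z)=0$ means $(D_qf)(qz)=\alpha\,(D_qf)(z)$, not $(D_qf)(qz)=0$; nonvanishing of $D_qf$ only excludes the latter, i.e. it shows that $\rho(f)$ omits the value $-\alpha q/(1-\alpha)$, which coincides with $0$ only when $\alpha=0$. For $\alpha>0$ nothing you cite excludes $(D_qf)(qz)/(D_qf)(z)=\alpha$: Theorem~\ref{sec2-thm1} characterizes membership in $\mathcal{C}_q(\alpha)$ solely by the closed-disk inequality $|q(D_qf)(qz)/(D_qf)(z)-\alpha q|\le 1-\alpha$, and $\alpha q$ is an interior point of that disk whenever $\alpha q<1-\alpha$, so the inequality alone cannot prevent $\rho(f)$ from vanishing. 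Hence the zero-freeness claim needs a genuinely different argument (it is exactly what makes $B_q^0$ the right target in the $\alpha=0$ setting of \cite{IMS90}, where your reasoning would be correct). To be fair, the paper's own proof passes over well-definedness in silence and only verifies the two compositions, so you are at least engaging with the right issue; but as written this step of your proposal fails for $0<\alpha<1$.
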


\begin{proof}
For $ h \in B_q^0 $, define a mapping $\sigma:\,B_q^0 \to \mathcal{A}$ by 
$$
z(D_q\sigma(h))(z)=\frac{z}{\prod_{n=0}^\infty \{((1-\alpha)h(zq^n)+\alpha q)/q\}}
$$
It is clear from Lemma~\ref{lm3} that $\sigma(h) \in \mathcal{C}_q$ and $(\rho\circ \sigma)(h)=h$. Considering the composition mapping $\sigma\circ \rho$ we compute that

\begin{eqnarray*}
z(D_q(\sigma\circ \rho)(f))(z)
&=&\frac{z}{\prod_{n=0}^\infty \{((1-\alpha)\rho(f)(zq^n)+\alpha q)/q\}}\\
&=&\frac{z}{\prod_{n=0}^\infty \{q(D_qf)(zq^{n+1})/q(D_qf)(zq^n)\}}
=z(D_qf)(z)
\end{eqnarray*}
or,
$$
(\sigma\circ \rho)(f)=f.
$$
Hence $\sigma\circ \rho$ and $\rho\circ \sigma$ are identity mappings and $\sigma$ 
is the inverse of $\rho$, i.e. the map $\rho(f)$ is invertible. Hence $\rho(f)$ is a bijection. This completes the proof of our theorem.
\end{proof}
\section{Proof of the main theorems}

In this section we prove our main theorems stated in Section~\ref{prelm}.
The following lemmas are useful for the proof of the Fekete-Szeg\"o problem and finding the Hankel determinant.
\begin{lemma}\label{l1}\cite[Theorem~1.13]{IMS90}
The mapping $\rho:\mathcal{S}_q^* \to B_q^0$ defined by
$$\rho(f)(z)=\ds \frac{f(qz)}{f(z)}
$$
is a bijection.
\end{lemma}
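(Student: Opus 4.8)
\textbf{Proof proposal for Lemma~\ref{l1}.}

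The plan is to imitate the argument already developed for the class $\mathcal{C}_q(\alpha)$ in Theorem~\ref{thm1}, specializing the infinite-product machinery to $\mathcal{S}_q^*$ (i.e. to $\alpha=0$ and with $z(D_qf)(z)$ replaced by $f(z)$). First I would check that $\rho$ is well defined: if $f\in\mathcal{S}_q^*$ then by the definition of $\mathcal{S}_q^*$ the quantity $z(D_qf)(z)/f(z) = (f(z)-f(qz))/((1-q)f(z)) = (1/(1-q))(1-f(qz)/f(z))$ lies in the disk $|w-1/(1-q)|\le 1/(1-q)$, which rearranges exactly to $f(qz)/f(z)\in\overline{\D}$; since $f$ is univalent, $f(qz)/f(z)$ omits the value $0$ and, by the maximum principle, maps $\D$ into $\D$ (not just $\overline{\D}$). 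Evaluating at $z=0$ using $f(0)=0,\,f'(0)=1$ gives $\rho(f)(0)=q$. Hence $\rho(f)\in B_q^0$.

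For surjectivity and injectivity I would construct the inverse explicitly. Given $h\in B_q^0$, the telescoping identity $f(z)=z\prod_{n=0}^\infty\{f(zq^{n+1})/f(zq^n)\}^{-1}$ (legitimate because $f(zq^n)\to f(0)=0$ forces the normalization $f(zq^n)/z q^n\to 1$) suggests defining
$$
\sigma(h)(z)=\frac{z}{\prod_{n=0}^\infty \{h(zq^n)/q\}}.
$$
One should first verify, via Lemma~\ref{lm2} with $\alpha=0$, that the product converges locally uniformly and, since $h\in B_q^0$ has no zeros, that it converges to a nonvanishing analytic function; this makes $\sigma(h)\in\mathcal{A}$ after checking the normalization at $0$. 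Then a direct computation of the telescoping quotient gives
$$
\frac{\sigma(h)(qz)}{\sigma(h)(z)}
=q\lim_{k\to\infty}\prod_{n=0}^{k}\frac{h(zq^{n+1})}{h(zq^{n})}
\cdot\frac{1}{q}
= h(z),
$$
so $\sigma(h)(qz)/\sigma(h)(z)=h(z)\in\overline{\D}$, which is precisely the membership condition for $\mathcal{S}_q^*$; thus $\sigma(h)\in\mathcal{S}_q^*$ and $(\rho\circ\sigma)(h)=h$. Conversely, substituting $h=\rho(f)$ into $\sigma$ and telescoping the product shows $(\sigma\circ\rho)(f)=f$. Therefore $\rho$ is a bijection with inverse $\sigma$.

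The only genuinely delicate point is the convergence and non-vanishing of the infinite product together with the normalization bookkeeping at $z=0$; but this is exactly the content of Lemmas~\ref{lm2} and~\ref{lm3} in the case $\alpha=0$ (or of the original source \cite{IMS90}), so it can be invoked rather than reproved. Everything else is the algebraic telescoping already carried out in the proof of Theorem~\ref{thm1}. In fact the cleanest write-up is simply: ``This is the special case $\alpha=0$ of Theorem~\ref{thm1}, upon replacing $z(D_qf)(z)$ by $f(z)$ throughout; see also \cite[Theorem~1.13]{IMS90}.''
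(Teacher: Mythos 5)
Your plan is essentially the right one, and it coincides with how the paper treats this statement: the paper gives no proof of Lemma~\ref{l1} (it is imported from \cite{IMS90}), but its proof of Theorem~\ref{thm1} via Lemmas~\ref{lm2} and~\ref{lm3} is exactly the infinite-product and telescoping scheme you specialize to $\alpha=0$ with $f$ in place of $z(D_qf)(z)$, which is also the argument of the original source. Two small repairs are needed. First, you justify that $f(qz)/f(z)$ omits $0$ ``since $f$ is univalent''; univalence is not part of the definition of $\mathcal{S}_q^*$ and is nowhere established in this paper, so argue instead from the membership inequality itself: it forces $f$ to be zero-free on $\D\setminus\{0\}$, because a zero $z_0\neq 0$ of $f$ would, by boundedness of $z(D_qf)(z)/f(z)$ near $z_0$, force $f(qz_0)=0$ and hence $f(q^nz_0)=0$ for all $n$, zeros accumulating at the origin, contradicting $f'(0)=1$; then the ratio $f(qz)/f(z)$ is analytic, equals $q$ at $z=0$, and omits $0$, and your maximum-principle step correctly upgrades $|f(qz)/f(z)|\le 1$ to a map into $\D$. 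Second, your telescoping display is written upside down: with $\sigma(h)(z)=z\big/\prod_{n\ge 0}\{h(zq^n)/q\}$ one has
$$\frac{\sigma(h)(qz)}{\sigma(h)(z)}=q\lim_{k\to\infty}\prod_{n=0}^{k}\frac{h(zq^{n})}{h(zq^{n+1})}=q\cdot\frac{h(z)}{h(0)}=h(z),$$
whereas the product you displayed (with the extra factor $1/q$) has limit $q/h(z)$; the identity you ultimately assert is the correct one, so this is only a slip in the intermediate line. Finally, note that calling the lemma ``the special case $\alpha=0$ of Theorem~\ref{thm1}'' is not a literal specialization, since that theorem concerns $\mathcal{C}_q(\alpha)$ through $z(D_qf)$; the computation transcribes verbatim, as you in fact carried out, but it does need to be transcribed rather than merely invoked.
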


\begin{lemma}\label{l3}\cite[Theorem~1.15]{IMS90}
Let $f\in \mathcal{A}$. Then $f\in\mathcal{S}_q^*$ if and only if there exists 
a probability measure $\mu$ supported on the unit circle such that
$$\frac{zf'(z)}{f(z)}=1+\int_{|\sigma|=1}\sigma z F_q^{'}(\sigma z)\rm{d}\mu(\sigma)
$$
where
\begin{equation}
F_q(z)=\ds \sum_{n=1}^\infty \frac{2\ln q}{q^n-1}z^n, \quad z\in \D .
\end{equation}
\end{lemma}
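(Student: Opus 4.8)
The plan is to compose three structure theorems. By Lemma~\ref{l1} the assignment $f\mapsto g$, $g(z)=f(qz)/f(z)$, is a bijection of $\mathcal{S}_q^*$ onto $B_q^0$; by Lemma~\ref{lm} every $g\in B_q^0$ can be written $g=\exp\{(\ln q)p\}$ with $p$ ranging over the Carath\'eodory class $\mathcal{P}$; and by the classical Herglotz theorem every $p\in\mathcal{P}$ has the form $p(z)=\int_{|\sigma|=1}\frac{1+\sigma z}{1-\sigma z}\,d\mu(\sigma)$ for a unique probability measure $\mu$ on the unit circle. Thus $f\in\mathcal{S}_q^*$ is equivalent to the existence of such a $\mu$ with $f(qz)/f(z)=\exp\{(\ln q)p(z)\}$, and the task is to translate this identity into the stated formula for $zf'(z)/f(z)$.

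For the ``only if'' part, note first that $f\in\mathcal{S}_q^*$ forces $f(z)\ne 0$ on $\D\setminus\{0\}$, so $f(z)/z$ is zero-free and equals $1$ at the origin, and we may expand $\log(f(z)/z)=\sum_{n\ge 1}\gamma_nz^n$. A short computation gives $\log(f(qz)/f(z))=\ln q+\sum_{n\ge 1}\gamma_n(q^n-1)z^n$, and comparing this with $(\ln q)p(z)=\ln q+\sum_{n\ge 1}(\ln q)p_nz^n$, where $p_n=2\int_{|\sigma|=1}\sigma^n\,d\mu(\sigma)$, yields $\gamma_n=(\ln q)p_n/(q^n-1)$. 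Differentiating the logarithmic expansion then gives
$$\frac{zf'(z)}{f(z)}-1=\sum_{n=1}^{\infty}n\gamma_nz^n=\int_{|\sigma|=1}\left(\sum_{n=1}^{\infty}\frac{2n\ln q}{q^n-1}(\sigma z)^n\right)d\mu(\sigma)=\int_{|\sigma|=1}\sigma z\,F_q'(\sigma z)\,d\mu(\sigma),$$
where the last equality uses $zF_q'(z)=\sum_{n\ge 1}\bigl(2n\ln q/(q^n-1)\bigr)z^n$. This is the desired representation.

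For the converse, run the chain backwards: given a probability measure $\mu$ for which $zf'(z)/f(z)$ has the stated form, put $p(z)=\int_{|\sigma|=1}\frac{1+\sigma z}{1-\sigma z}\,d\mu(\sigma)\in\mathcal{P}$ and $g=\exp\{(\ln q)p\}$, which lies in $B_q^0$ by Lemma~\ref{lm}; by Lemma~\ref{l1} there is a unique $F\in\mathcal{S}_q^*$ with $F(qz)/F(z)=g(z)$. Applying the ``only if'' computation to $F$ shows $zF'(z)/F(z)=1+\int_{|\sigma|=1}\sigma z\,F_q'(\sigma z)\,d\mu(\sigma)$, the same right-hand side as for $f$; since $f$ and $F$ are both normalized members of $\mathcal{A}$, the equality $zf'/f=zF'/F$ integrates to $\log(f(z)/z)=\log(F(z)/z)$, whence $f=F\in\mathcal{S}_q^*$.

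The only genuinely delicate step is the interchange of summation and integration in the displayed identity (and the termwise differentiation preceding it). Since the series defining $F_q$ has radius of convergence $1$, on each disk $|z|\le r<1$ the partial sums of $\sum\bigl(2n\ln q/(q^n-1)\bigr)(\sigma z)^n$ converge uniformly in both $z$ and $\sigma$ with $|\sigma|=1$; integrating the partial sums against the finite measure $\mu$ and passing to the limit then justifies the exchange. All remaining manipulations are routine power-series bookkeeping, so the heart of the argument is really the observation that $F_q$ is precisely the function encoding, through Lemma~\ref{lm} and Lemma~\ref{l1}, the extreme point $p(z)=(1+z)/(1-z)$ of $\mathcal{P}$.
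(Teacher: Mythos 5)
Your proof is correct, but note that the paper does not prove this lemma at all: it is imported verbatim as \cite[Theorem~1.15]{IMS90}. Your argument --- composing the bijection of Lemma~\ref{l1} with the exponential representation of Lemma~\ref{lm} and the classical Herglotz formula, via the coefficient identity $\gamma_n=(\ln q)\,p_n/(q^n-1)$ --- is exactly the standard route to this result, and it coincides with the computation the paper itself performs (cf.\ the relation (\ref{e5}) in the proof of Theorem~\ref{T3}), so it is the expected proof rather than a genuinely different one.
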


\begin{lemma}\label{l4}\cite[pp.~254-256]{LZ83}
Let the function $p\in\mathcal{P}$ and be given by the power series (\ref{e6}). Then
$$
2p_2=p_1^2+x(4-p_1^2),
$$
$$4p_3=p_1^3+2(4-p_1^2)p_1x-p_1(4-p_1^2)x^2+2(4-p_1^2)(1-|x|^2)z,
$$
for some $x$ and $z$ satisfying $|x|\le 1$, $|z|\le 1$, and $p_1\in [0,2]$.
\end{lemma}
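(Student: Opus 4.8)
The plan is to derive both identities from the classical Schur (continued-fraction) reduction of Schwarz functions; this is the standard argument behind Lemma~\ref{l4} (see \cite{LZ83}). First I would normalize. Since $|p_1|\le 2$ for every $p\in\mathcal{P}$ and $p(e^{i\theta}\zeta)\in\mathcal{P}$ whenever $p\in\mathcal{P}$, one may replace $p$ by a rotation so that $p_1=|p_1|\in[0,2]$; the identities are understood for this normalized representative (this normalization is essential, see below). Next, via the Cayley transform I set $\omega(\zeta):=(p(\zeta)-1)/(p(\zeta)+1)$, which is analytic on $\D$ with $\omega(0)=0$ and $|\omega(\zeta)|<1$, and conversely $p=(1+\omega)/(1-\omega)$. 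Writing $\omega(\zeta)=c_1\zeta+c_2\zeta^2+c_3\zeta^3+\cdots$ and comparing Taylor coefficients in the relation $p-1=\omega\,(p+1)$ gives
\[
c_1=\frac{p_1}{2},\qquad c_2=\frac{p_2}{2}-\frac{p_1^2}{4},\qquad c_3=\frac12\Bigl(p_3-p_1p_2+\frac{p_1^3}{4}\Bigr).
\]

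The core of the proof is a two-step Schur reduction of $\omega$. Put $\omega_1(\zeta)=\omega(\zeta)/\zeta$; by the Schwarz lemma $\omega_1$ maps $\D$ into $\overline{\D}$ and $\omega_1(0)=c_1$ with $|c_1|\le1$. Assume first $|c_1|<1$ (equivalently $p_1<2$). Then $\omega_2(\zeta):=\dfrac{1}{\zeta}\cdot\dfrac{\omega_1(\zeta)-c_1}{1-\overline{c_1}\,\omega_1(\zeta)}$ has a removable singularity at the origin and again maps $\D$ into $\overline{\D}$; evaluating at $0$ gives
\[
\omega_2(0)=\frac{c_2}{1-|c_1|^2}=:x,\qquad |x|\le1,
\]
so $c_2=(1-|c_1|^2)x$. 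Applying the same reduction once more to $\omega_2$, with $\omega_3(\zeta):=\dfrac{1}{\zeta}\cdot\dfrac{\omega_2(\zeta)-x}{1-\overline{x}\,\omega_2(\zeta)}$ mapping $\D$ into $\overline{\D}$ and $\omega_3(0)=:z$ (so $|z|\le1$), a short computation of $\omega_2'(0)$ together with $c_2=(1-|c_1|^2)x$ yields
\[
c_3=(1-|c_1|^2)\bigl[(1-|x|^2)z-\overline{c_1}\,x^2\bigr],
\]
and one checks directly that this also persists in the boundary sub-cases $|c_1|=1$ or $|x|=1$.

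Finally I would substitute. Because $p_1\in[0,2]$ is real we have $\overline{c_1}=c_1=p_1/2$ and $1-|c_1|^2=(4-p_1^2)/4$; inserting the formulas for $c_1,c_2,c_3$ into the two displayed identities and clearing denominators produces precisely $2p_2=p_1^2+x(4-p_1^2)$ and $4p_3=p_1^3+2(4-p_1^2)p_1x-p_1(4-p_1^2)x^2+2(4-p_1^2)(1-|x|^2)z$. The degenerate case $p_1=2$ is immediate: the Schwarz lemma forces $\omega(\zeta)=c_1\zeta$, hence $c_2=c_3=0$ and $p_2=p_3=2$, and both identities hold trivially since $4-p_1^2=0$. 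The one genuinely delicate step is the Schur reduction itself: verifying that $\omega_2$ and $\omega_3$ are honest self-maps of $\overline{\D}$ (the removable-singularity argument at $0$ plus the Schwarz lemma) and correctly propagating the constants through the two M\"obius conjugations; everything else is bookkeeping. One must also keep the normalization in view, since for a general complex $p_1$ one has $1-|c_1|^2=(4-|p_1|^2)/4$ rather than $(4-p_1^2)/4$, and the identities in the stated algebraic form genuinely require $p_1\in[0,2]$.
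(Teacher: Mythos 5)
Your derivation is correct: the Cayley transform coefficients $c_1=p_1/2$, $c_2=p_2/2-p_1^2/4$, $c_3=\tfrac12\bigl(p_3-p_1p_2+\tfrac{p_1^3}{4}\bigr)$ are right, the two Schur reduction steps give $c_2=(1-|c_1|^2)x$ and $c_3=(1-|c_1|^2)\bigl[(1-|x|^2)z-\overline{c_1}\,x^2\bigr]$ with $|x|\le 1$, $|z|\le 1$, and substituting with $c_1=p_1/2$ real reproduces exactly the two displayed identities; the boundary cases $p_1=2$ and $|x|=1$ are handled correctly by the maximum principle. Note, however, that the paper itself offers no proof of this lemma --- it is quoted verbatim from Libera and Zlotkiewicz \cite{LZ83} --- so there is nothing internal to compare against; your argument via the Cayley transform and the Schur algorithm is a clean self-contained route, whereas the cited source obtains the same parametrizations from the classical coefficient (Carath\'eodory--Toeplitz type) conditions for functions with positive real part. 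Your closing remark about the normalization is also well taken: $p_1\in[0,2]$ is not a conclusion but a standing normalization (achievable by rotation, exactly as the paper uses it in the proof of Theorem~\ref{T4} by setting $p_1=c\in[0,2]$), and for complex $p_1$ the factor would be $4-|p_1|^2$ rather than $4-p_1^2$.
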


\begin{lemma}\label{l5}\cite[Lemma~1]{MM92}
Let the function $p\in\mathcal{P}$ and be given by the power series (\ref{e6}). Then for any real number $\lambda$,
$$
|p_2-\lambda p_1^2|\le 2 \max\{1, |2\lambda-1|\}
$$
and the result is sharp.
\end{lemma}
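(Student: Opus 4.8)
The plan is to derive the whole estimate from the first identity in Lemma~\ref{l4}, reducing it to a one-parameter optimization. First I would note that for $\theta\in\mathbb{R}$ the function $z\mapsto p(e^{i\theta}z)$ again lies in $\mathcal{P}$ and has coefficients $e^{i\theta}p_1,\ e^{2i\theta}p_2,\dots$, so $p_2-\lambda p_1^2$ is merely multiplied by $e^{2i\theta}$; since $|p_1|\le 2$ for $p\in\mathcal{P}$, a suitable rotation lets me assume $p_1=c\in[0,2]$, which is exactly the normalization under which Lemma~\ref{l4} is stated. Using $2p_2=p_1^2+x(4-p_1^2)$ with $|x|\le 1$, I would write
$$p_2-\lambda p_1^2=\left(\tfrac12-\lambda\right)c^2+\tfrac{x}{2}(4-c^2),$$
and the triangle inequality then gives
$$\left|p_2-\lambda p_1^2\right|\le\left|\tfrac12-\lambda\right|c^2+\tfrac12(4-c^2).$$

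Next I would maximise the right-hand side over $t=c^2\in[0,4]$. It equals $\phi(t)=2+\left(\left|\tfrac12-\lambda\right|-\tfrac12\right)t$, an affine function of $t$. If $0\le\lambda\le 1$ the coefficient of $t$ is $\le 0$, so the maximum is $\phi(0)=2$; if $\lambda<0$ or $\lambda>1$ the coefficient of $t$ is positive, so the maximum is $\phi(4)=4\left|\tfrac12-\lambda\right|=2|2\lambda-1|$. In either case the bound is $2\max\{1,|2\lambda-1|\}$, as claimed.

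For sharpness I would exhibit two explicit extremal functions. The function $p(z)=(1+z^2)/(1-z^2)=1+2z^2+\cdots$ has $p_1=0$ and $p_2=2$, hence $|p_2-\lambda p_1^2|=2$, giving equality whenever $0\le\lambda\le1$. The function $p(z)=(1+z)/(1-z)=1+2z+2z^2+\cdots$ has $p_1=p_2=2$, hence $p_2-\lambda p_1^2=2-4\lambda$, whose modulus is $2|2\lambda-1|$, giving equality whenever $\lambda\notin[0,1]$.

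I do not expect a genuine obstacle: once Lemma~\ref{l4} is available the computation is elementary. The only points requiring a little care are the case split on $\lambda$ together with checking that the two candidate extremal functions cover exactly the complementary ranges $[0,1]$ and $\mathbb{R}\setminus[0,1]$, and, if one wishes to be fully rigorous about the hypothesis $p_1\in[0,2]$, recording why the preliminary rotation leaves $|p_2-\lambda p_1^2|$ unchanged.
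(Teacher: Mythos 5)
Your argument is correct, but it is worth noting that the paper itself gives no proof of this lemma at all: it is quoted verbatim from Ma--Minda \cite{MM92}, whose original treatment works directly with the Schwarz function $w$ in the representation $p(z)=(1+w(z))/(1-w(z))$ and carries out a case analysis on $\lambda$ (obtaining, in addition, a description of all extremal functions). What you do instead is deduce the inequality from the first Libera--Zlotkiewicz identity of Lemma~\ref{l4}, which the paper already imports for the Hankel determinant: after the rotation $p(z)\mapsto p(e^{i\theta}z)$ (which multiplies $p_2-\lambda p_1^2$ by $e^{2i\theta}$ and justifies the normalization $p_1=c\in[0,2]$ required in Lemma~\ref{l4}), the identity $2p_2=c^2+x(4-c^2)$, $|x|\le 1$, gives $\left|p_2-\lambda p_1^2\right|\le\left|\tfrac12-\lambda\right|c^2+\tfrac12(4-c^2)$, and maximizing this affine function of $t=c^2\in[0,4]$ at $t=0$ or $t=4$ according to whether $\lambda\in[0,1]$ or not yields exactly $2\max\{1,|2\lambda-1|\}$; your two extremal functions $(1+z^2)/(1-z^2)$ and $(1+z)/(1-z)$ do cover the complementary ranges $[0,1]$ and $\mathbb{R}\setminus[0,1]$ (overlapping at the endpoints), so sharpness is complete. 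This route buys a short, self-contained proof using only tools already quoted in the paper, and in fact the same computation gives the bound for complex $\lambda$ as well; what it does not reproduce is Ma--Minda's finer information on when equality forces $p$ to be a rotation of one of the two extremal functions, which the paper's citation implicitly carries but which is not needed for Theorem~\ref{T3}.
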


\begin{proof}[\bf Proof of Theorem~\ref{T3}]
Let $f\in \mathcal{S}^*_q$.
Then by Lemma~\ref{l1}, there exist a function $g\in B_q ^0$ such that 
$g(z)=f(qz)/f(z)$. 
Since $g\in B_q ^0$, by Lemma~\ref{lm}, $g(z)$ has the representation (\ref{eq4}). That is,
$$ \frac{f(qz)}{f(z)}=\exp\{(\ln q)p(z)\}.
$$
Define the function $\phi(z)=\Log\{f(z)/z\}$ and set
$$\phi(z)=\Log\frac{f(z)}{z}=\sum_{n=1}^\infty \phi_n z^n.
$$
On solving, we get 
$$
\ln q+\phi(qz)=\phi(z)+(\ln q)p(z).
$$
This implies
\begin{equation}\label{e5}
\phi_n=p_n\left(\frac{\ln q}{q^n-1}\right).
\end{equation}
So, $f(z)$ can be written as
\begin{equation}\label{e5.5}
f(z)=z\exp\left[\sum_{n=1}^\infty \phi_n z^n\right],
\end{equation}
where $\phi_n$ is defined in (\ref{e5}) and $f(z)$ has the form (\ref{e1}).
Equating the coefficients of both sides in (\ref{e5.5}) and using the value of $\phi_n$ given in $(\ref{e5})$, we obtain
\begin{equation}\label{e5.6}
a_2=\phi_1=p_1\left(\frac{\ln q}{q-1}\right),\quad
a_3=\phi_2+\frac{\phi_1^2}{2}=p_2\left(\frac{\ln q}{q^2-1}\right)+\frac{p_1^2}{2}\left(\frac{\ln q}{q-1}\right)^2.
\end{equation}
Thus,
\begin{eqnarray*}
|a_3-\mu a_2^2|&=&\left|p_2\left(\frac{\ln q}{q^2-1}\right)+\frac{p_1^2}{2}\left(\frac{\ln q}{q-1}\right)^2-\mu p_1^2 \left(\frac{\ln q}{q-1}\right)^2\right|\\
&=&\left(\frac{\ln q}{q^2-1}\right)\left|p_2-(2\mu -1)\ds\frac{\ds\left(\frac{\ln q}{q-1}\right)^2}{\ds\left(\frac{2\ln q}{q^2-1}\right)}p_1^2\right|\\
&\le& \max\left\{\left|2(1-2\mu)\left(\frac{\ln q}{q-1}\right)^2+2\left(\frac{\ln q}{q^2-1}\right)\right|, 2\left(\frac{\ln q}{q^2-1}\right)\right\},
\end{eqnarray*}
where the last inequality follows from Lemma~\ref{l5}. It now remains to prove the sharpness part.

This can easily be shown by the definition of $\mathcal{S}^*_q$ that the functions $F_1$ and $F_2$ defined in the statement of Theorem~\ref{T3} belong to the class $\mathcal{S}^*_q$. One can also see that $F_1 \in \mathcal{S}^*_q$ as a special case to Lemma~\ref{l3}, when the measure has a unit mass. The functions $F_1$ and $F_2$ show the sharpness of the result. This completes the proof of the theorem.
\end{proof}

We now pose the following conjecture on Fekete-Szeg\"o problem for $\mathcal{S}_q^*(\alpha)$. 
\begin{conjecture}\label{sqsa-f}
Let $f\in\mathcal{S}_q^*(\alpha)$, $0\le\alpha<1$, be of the form (\ref{e1}) and $\mu$ be any complex number. Then
$$
|a_3-\mu a_2^2|\le \max\left\{\left|2(1-2\mu)\left(\frac{\ln \frac{q}{1-\alpha(1-q)}}{q-1}\right)^2+2\left(\frac{\ln \frac{q}{1-\alpha(1-q)}}{q^2-1}\right)\right|,2\left(\frac{\ln \frac{q}{1-\alpha(1-q)}}{q^2-1}\right)\right\}.
$$
Equality occurs for the functions
\begin{equation}\label{e7}
F_1(z):=z\left\{\exp \left[\ds \sum_{n=1}^\infty \frac{2\ln \frac{q}{1-\alpha(1-q)}}{q^n-1}z^n\right]\right\}
\end{equation}
and
\begin{equation}\label{e8}
F_2(z):=z\left\{\exp \left[\ds \sum_{n=1}^\infty \frac{2\ln \frac{q}{1-\alpha(1-q)}}{q^{2n}-1}z^{2n}\right]\right\}.
\end{equation}
\end{conjecture}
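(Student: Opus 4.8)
A natural strategy to settle Conjecture~\ref{sqsa-f} is to repeat the proof of Theorem~\ref{T3} with $\mathcal{S}^*_q$ replaced by $\mathcal{S}^*_q(\alpha)$. The first step is to record the analogue of Lemma~\ref{l1}: writing $w(z)=f(qz)/f(z)$ and using the identity $z(D_qf)(z)=(f(z)-f(qz))/(1-q)$, the defining inequality of $\mathcal{S}^*_q(\alpha)$ is equivalent to $|w(z)-\alpha q|\le 1-\alpha$ for $z\in\D$; in the terminology of \cite{AS14-2} this is precisely the statement that
$$g(z):=\frac{1}{1-\alpha}\left(\frac{f(qz)}{f(z)}-\alpha q\right)$$
belongs to $B_q^0$ (note $g(0)=q$). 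Lemma~\ref{lm} then gives $g(z)=\exp\{(\ln q)p(z)\}$ for some $p\in\mathcal{P}$, so that
$$\frac{f(qz)}{f(z)}=\alpha q+(1-\alpha)\exp\{(\ln q)p(z)\}.$$

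Next one sets $\phi(z)=\Log\{f(z)/z\}=\sum_{n\ge 1}\phi_n z^n$, so that $f(z)=z\exp[\phi(z)]$ and $f(qz)/f(z)=q\exp[\phi(qz)-\phi(z)]$. Combining this with the previous identity yields the $q$-difference functional equation
$$\phi(qz)-\phi(z)=\Log\left[\alpha+\frac{1-\alpha}{q}\exp\{(\ln q)p(z)\}\right].$$
Expanding both sides in powers of $z$ and comparing coefficients expresses each $\phi_n$ as an explicit polynomial in $p_1,\dots,p_n$ with coefficients depending on $q$ and $\alpha$. Reading off $\phi_1$ and $\phi_2$ and using, as in the proof of Theorem~\ref{T3}, that $a_2=\phi_1$ and $a_3=\phi_2+\tfrac12\phi_1^2$, one substitutes into $a_3-\mu a_2^2$ and should obtain an identity of the form $a_3-\mu a_2^2=A(q,\alpha)\,(p_2-\lambda p_1^2)$ for explicit $A(q,\alpha)$ and $\lambda=\lambda(q,\alpha,\mu)$. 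Applying Lemma~\ref{l5} in the form used in the proof of Theorem~\ref{T3} gives $|p_2-\lambda p_1^2|\le 2\max\{1,|2\lambda-1|\}$, and the claimed inequality follows once $A(q,\alpha)$ and $2\lambda-1$ are simplified. For the extremal functions one would take $p(z)=(1+z)/(1-z)$ and $p(z)=(1+z^2)/(1-z^2)$, compute the resulting $f$, and check that it lies in $\mathcal{S}^*_q(\alpha)$ and realizes equality; these should be the functions $F_1$ and $F_2$ in (\ref{e7}) and (\ref{e8}).

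The step I expect to be the main obstacle is extracting $\phi_1$ and $\phi_2$ from the functional equation above. When $\alpha=0$ this equation is linear, $\phi(qz)-\phi(z)=(\ln q)p(z)$, and $\phi_n=p_n\ln q/(q^n-1)$ is immediate; for $\alpha\in(0,1)$ the right-hand side is the logarithm of an affine function of $\exp\{(\ln q)p(z)\}$, so its $z$-coefficients acquire additional $p_1^2$ and $p_1p_2$ contributions. One has to carry these terms through $a_3-\mu a_2^2$ and verify that after cancellation the surviving constants are governed by $\tilde q:=q/(1-\alpha(1-q))$ (which reduces to $q$ when $\alpha=0$, recovering Theorem~\ref{T3}), so that the bound takes the stated form. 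A second delicate point is sharpness: one must confirm that the candidate extremal functions actually satisfy $|f(qz)/f(z)-\alpha q|\le 1-\alpha$ on all of $\D$ — a containment statement for the image of $z\mapsto f(qz)/f(z)$ — and it is at this step that the precise form of the extremal functions (and, if it should prove necessary, of the constant in the bound) ought to be pinned down.
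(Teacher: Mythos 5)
The first thing to note is that the paper does not prove this statement at all: it is posed as Conjecture~\ref{sqsa-f}, and the remark after Conjecture~\ref{sqsa-h} says only that a proof ``will follow in the similar manner'' as Theorem~\ref{T3}, the unresolved issue being precisely the identification of the extremal functions. Your proposal is exactly that parallel strategy, with the two hardest steps (extracting $\phi_1,\phi_2$ from the nonlinear functional equation, and verifying sharpness) explicitly left open, so it is a programme rather than a proof — and the programme meets two concrete obstructions. First, the reduction to $B_q^0$ is not justified for $\alpha>0$: the definition of $\mathcal{S}_q^*(\alpha)$ does give $|f(qz)/f(z)-\alpha q|\le 1-\alpha$, hence $g:=(f(qz)/f(z)-\alpha q)/(1-\alpha)$ satisfies $g(0)=q$ and maps $\D$ into $\overline{\D}$, but membership in $B_q^0$ also requires $0\notin g(\D)$, i.e.\ $f(qz)\neq \alpha q f(z)$, which nothing in the definition forces. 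What is automatic (by the usual zero/pole iteration argument) is that $w=f(qz)/f(z)$ omits the value $0$, which translates into $g$ omitting $-\alpha q/(1-\alpha)$ — a different condition, coinciding with $0\notin g(\D)$ only when $\alpha=0$. So Lemma~\ref{lm} cannot be applied to every $f\in\mathcal{S}_q^*(\alpha)$, and your representation $f(qz)/f(z)=\alpha q+(1-\alpha)\exp\{(\ln q)p(z)\}$ parametrizes only a subclass.

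Second, even on that subclass the computation does not produce the conjectured constants. Expanding your functional equation gives $\phi_1=\frac{(1-\alpha)\ln q}{q-1}\,p_1$ and $\phi_2=\frac{(1-\alpha)\ln q}{q^2-1}\,p_2+\frac{\alpha(1-\alpha)(\ln q)^2}{2(q^2-1)}\,p_1^2$, so every constant enters through $(1-\alpha)\ln q$ rather than through $\ln\frac{q}{1-\alpha(1-q)}$; since $\ln\frac{1-\alpha(1-q)}{q}>(1-\alpha)\ln\frac1q$ for $0<\alpha<1$ by strict concavity of the logarithm, the bound this route yields is strictly smaller than the conjectured one, and the candidate extremals obtained from $p(z)=(1+z)/(1-z)$ and $p(z)=(1+z^2)/(1-z^2)$ are not the functions $F_1,F_2$ of (\ref{e7})--(\ref{e8}). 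The sharpness step is likewise not a routine check: a direct computation gives $F_1(qz)/F_1(z)=q\exp\bigl[2\bigl(\ln\frac{q}{1-\alpha(1-q)}\bigr)\frac{z}{1-z}\bigr]$, whose values fill the punctured disk $\{0<|w|<1-\alpha(1-q)\}$, and this set is not contained in the disk $|w-\alpha q|\le 1-\alpha$ once $\alpha q>0$; so even the membership $F_1\in\mathcal{S}_q^*(\alpha)$ needs an argument (or the candidate extremals need to be reconsidered). These are exactly the points the authors could not settle — which is why the statement is a conjecture — and your outline reproduces the intended approach without overcoming them.
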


\begin{proof}[\bf Proof of Theorem~\ref{T4}]
Given that $f\in \mathcal{S}_q^*$ having the form \eqref{e1}. In (\ref{e5.6}), we already obtained the values of $a_2$ and $a_3$. In the similar way one can find the value of
$a_4$. Indeed,

$$a_4=\phi_3+\phi_1\phi_2+\frac{\phi_1^3}{6}=p_3\left(\frac{\ln q}{q^3-1}\right)+p_1p_2\left(\frac{\ln q}{q-1}\right)\left(\frac{\ln q}{q^2-1}\right)+\frac{p_1^3}{6}\left(\frac{\ln q}{q-1}\right)^3.
$$
Hence,
$$|a_2a_4-a_3^2|=\left|-\frac{p_1^4}{12}\left(\frac{\ln q}{q-1}\right)^4+p_1p_3\left(\frac{\ln q}{q-1}\right)\left(\frac{\ln q}{q^3-1}\right)-p_2^2\left(\frac{\ln q}{q^2-1}\right)^2\right|.
$$

Suppose now that $p_1=c$ and $0\le c\le 2$. Using Lemma~\ref{l4}, we obtain
\begin{eqnarray*}
|a_2a_4-a_3^2|&=&\left|-\frac{c^4}{12}\left[\left(\frac{\ln q}{q-1}\right)^4-3\left(\frac{\ln q}{q-1}\right)\left(\frac{\ln q}{q^3-1}\right)+3\left(\frac{\ln q}{q^2-1}\right)^2\right]\right.\\
&& \left.+\frac{c^2}{2}(4-c^2)x\left[\left(\frac{\ln q}{q-1}\right)\left(\frac{\ln q}{q^3-1}\right)-\left(\frac{\ln q}{q^2-1}\right)^2\right]\right.\\
&&\left.+\frac{(4-c^2)(1-|x|^2)cz}{2}\left(\frac{\ln q}{q-1}\right)\left(\frac{\ln q}{q^3-1}\right)\right.\\
&&\left.-\left[\frac{c^2}{4}(4-c^2)\left(\frac{\ln q}{q-1}\right)\left(\frac{\ln q}{q^3-1}\right)+\frac{(4-c^2)^2}{4}\left(\frac{\ln q}{q^2-1}\right)^2\right]x^2\right|\\
&\le & \frac{c^4}{12}\left|\left(\frac{\ln q}{q-1}\right)^4-3\left(\frac{\ln q}{q-1}\right)\left(\frac{\ln q}{q^3-1}\right)+3\left(\frac{\ln q}{q^2-1}\right)^2\right|
 +\frac{(4-c^2)c}{2}\left(\frac{\ln q}{q-1}\right)\\
&& \left(\frac{\ln q}{q^3-1}\right)+\frac{c^2}{2}(4-c^2)\left[\left(\frac{\ln q}{q-1}\right)\left(\frac{\ln q}{q^3-1}\right)-\left(\frac{\ln q}{q^2-1}\right)^2\right]\rho\\
&&+\left(\frac{4-c^2}{4}\right)\left[(4-c^2)\left(\frac{\ln q}{q^2-1}\right)^2+c(c-2)\left(\frac{\ln q}{q-1}\right)\left(\frac{\ln q}{q^3-1}\right)\right]\rho^2\\
&=& F(\rho)
\end{eqnarray*}
with $\rho=|x|\le 1$. Furthermore,
$$
F'(\rho)\ge 0.
$$
This implies that $F$ is an increasing function of $\rho$ and thus the upper bound for $|a_2a_4-a_3^2|$ corresponds to $\rho=1$.
Hence,
$$|a_2a_4-a_3^2|\le F(1)=G(c)\, \mbox{ (say)}.
$$
We can see that 
$$\left(\frac{\ln q}{q-1}\right)^4-3\left(\frac{\ln q}{q-1}\right)\left(\frac{\ln q}{q^3-1}\right)+3\left(\frac{\ln q}{q^2-1}\right)^2> 0, \mbox{ for } 0<q<1.
$$
Now, a simple calculation gives that
\begin{eqnarray*}
G(c)=\frac{c^4}{12}\left[\left(\frac{\ln q}{q-1}\right)^4-12\left(\frac{\ln q}{q-1}\right)\left(\frac{\ln q}{q^3-1}\right)+12\left(\frac{\ln q}{q^2-1}\right)^2\right]\\
&& \hspace*{-7cm} +c^2\left[3\left(\frac{\ln q}{q-1}\right)\left(\frac{\ln q}{q^3-1}\right)-4\left(\frac{\ln q}{q^2-1}\right)^2\right]+4\left(\frac{\ln q}{q^2-1}\right)^2.
\end{eqnarray*}
The expression $G'(c)=0$ gives either $c=0$ or 
$$c^2=\frac{6\ds \left[4\left(\frac{\ln q}{q^2-1}\right)^2-3\left(\frac{\ln q}{q-1}\right)\left(\frac{\ln q}{q^3-1}\right)\right]}{\ds \left(\frac{\ln q}{q-1}\right)^4-12\left(\frac{\ln q}{q-1}\right)\left(\frac{\ln q}{q^3-1}\right)+12\left(\frac{\ln q}{q^2-1}\right)^2}.
$$
We can verify that $G''(c)$ is negative for $c=0$ and positive for other values of $c$. Hence the maximum of $G(c)$ occurs at $c=0$. Thus, we obtain
$$
|a_2a_4-a_3^2|\le 4\left(\frac{\ln q}{q^2-1}\right)^2.
$$
The function $F_2$ defined in the statement of the theorem shows the sharpness of the result. 
This completes the proof of the theorem.
\end{proof}

We pose one more conjecture which is about the Hankel Determinant for the class $\mathcal{S}_q^*(\alpha)$. 
\begin{conjecture}\label{sqsa-h}
Let $f\in\mathcal{S}_q^*(\alpha)$, $0\le \alpha<1$, be of the form (\ref{e1}). Then
$$|a_2a_4-a_3^2|\le  4\left(\frac{\ln \frac{q}{1-\alpha(1-q)}}{q^2-1}\right)^2.
$$
Equality occurs for the function $F_2$ defined in $(\ref{e8})$.
\end{conjecture}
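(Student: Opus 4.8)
The plan is to reduce the problem to the computation already carried out in the proof of Theorem~\ref{T4}. Observe that the conjectured bound is obtained from the bound of Theorem~\ref{T4} by the single substitution $\ln q\mapsto\ln\tilde q$, where $\tilde q:=q/(1-\alpha(1-q))$ satisfies $0<\tilde q<1$, so $\ln\tilde q<0$. Thus the heart of the matter is to establish, for $f\in\mathcal{S}_q^*(\alpha)$, the coefficient representation
$$\phi_n=p_n\left(\frac{\ln\tilde q}{q^n-1}\right),\qquad n\ge 1,$$
where $\phi(z)=\Log\{f(z)/z\}=\sum_{n\ge 1}\phi_n z^n$ and $p\in\mathcal{P}$. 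Granting this, the formulas for $a_2,a_3,a_4$ in terms of $\phi_1,\phi_2,\phi_3$ are purely formal, coming from $f(z)=z\exp[\sum_n\phi_n z^n]$ independently of $\alpha$, so they coincide with those in the proof of Theorem~\ref{T4} with $\ln q$ replaced throughout by $\ln\tilde q$.

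To obtain the displayed relation I would first record the analogue of Lemma~\ref{l1}: the map $\rho(f)(z)=(f(qz)/f(z)-\alpha q)/(1-\alpha)$ is a bijection of $\mathcal{S}_q^*(\alpha)$ onto $B_q^0$. A direct computation from the definition shows $f\in\mathcal{S}_q^*(\alpha)$ if and only if $g(z):=f(qz)/f(z)$ satisfies $|g(z)-\alpha q|\le 1-\alpha$, and since $g(0)=q$ this is exactly $\rho(f)\in B_q^0$ (cf. \cite[Theorem~2.2]{AS14-2}). I would then establish the Herglotz representation for $\mathcal{S}_q^*(\alpha)$ analogous to Lemma~\ref{l3} and Theorem~\ref{thm2}: there is a probability measure $\mu$ on $|\sigma|=1$ with $zf'(z)/f(z)=1+\int_{|\sigma|=1}\sigma z F_{q,\alpha}'(\sigma z)\,d\mu(\sigma)$ and $F_{q,\alpha}$ as in (\ref{MainThm1:eq1}). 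Since $zf'(z)/f(z)=1+z\phi'(z)$, comparing $z^n$ coefficients gives $\phi_n=\frac{2\ln\tilde q}{q^n-1}\int\sigma^n\,d\mu(\sigma)$, and the Carath\'eodory correspondence $p_n=2\int\sigma^n\,d\mu(\sigma)$ for $p\in\mathcal{P}$ yields precisely $\phi_n=p_n\,\ln\tilde q/(q^n-1)$.

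With the representation in hand, I would transcribe the argument of Theorem~\ref{T4} verbatim: substitute the expressions for $a_2,a_3,a_4$ into $a_2a_4-a_3^2$, use Lemma~\ref{l4} to parametrize $p_2,p_3$ by $p_1=c\in[0,2]$ and $|x|\le 1$, show the resulting $F(\rho)$ is increasing so the bound occurs at $\rho=1$, and maximize the resulting $G(c)$ over $c\in[0,2]$. If the maximum is again attained at $c=0$, the bound $4(\ln\tilde q/(q^2-1))^2$ follows, and sharpness is to be exhibited by $F_2$ in (\ref{e8}), for which $p_1=0$.

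The main obstacle is the first step. Unlike the case $\alpha=0$, the admissible disk $|g-\alpha q|\le 1-\alpha$ is \emph{not} centred at the origin, so the exponential parametrization $g=(1-\alpha)\exp\{(\ln q)P\}+\alpha q$ of $B_q^0$ furnished by Lemma~\ref{lm} relates $\phi_n$ to the coefficients of $P$ nonlinearly; the clean linear relation above must instead be forced out of the Herglotz representation, whose proof for $\mathcal{S}_q^*(\alpha)$ is the crux. A second, subtler difficulty is that the positivity used in the maximization of $G(c)$ is not automatic after the substitution: factoring out $(\ln\tilde q)^2$, the leading term of the bracket $(\ln\tilde q/(q-1))^4-3(\ln\tilde q/(q-1))(\ln\tilde q/(q^3-1))+3(\ln\tilde q/(q^2-1))^2$ is scaled by $(\ln\tilde q)^2\le(\ln q)^2$, so neither the claim that $c=0$ is the maximizer nor the verification that $F_2$ genuinely lies in $\mathcal{S}_q^*(\alpha)$ can be inherited from Theorem~\ref{T4}; both must be re-examined for each $\alpha\in[0,1)$. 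These are exactly the points on which a proof of the conjecture would stand or fall.
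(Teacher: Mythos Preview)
The paper does not prove this statement; it is posed as a conjecture, and the only accompanying discussion is the Remark after Conjecture~\ref{sqsa-h}, which asserts that a proof ``will follow in the similar manner'' as that of Theorem~\ref{T4}, the open issue being the extremal function. Your plan---carry out the computation of Theorem~\ref{T4} with $\ln q$ replaced by $\ln\tilde q=\ln\bigl(q/(1-\alpha(1-q))\bigr)$ throughout---is exactly the approach the paper has in mind, so at the level of strategy you and the paper agree.

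On your two obstacles: the first largely dissolves, since the Herglotz representation for $\mathcal{S}_q^*(\alpha)$ you say you would need is already established as \cite[Theorem~1.1]{AS14-2} (it is invoked in the proof of Theorem~\ref{thm2}, and is the $\mathcal{S}_q^*(\alpha)$ analogue of Lemma~\ref{l3} with $F_q$ replaced by $F_{q,\alpha}$ from (\ref{MainThm1:eq1})); with it, the relation $\phi_n=p_n\ln\tilde q/(q^n-1)$ follows at once, and no detour through Lemma~\ref{lm} is needed. Your second concern, however, pinpoints the genuine gap. In the proof of Theorem~\ref{T4} one uses that $(\ln q/(q-1))^4-3(\ln q/(q-1))(\ln q/(q^3-1))+3(\ln q/(q^2-1))^2>0$. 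After the substitution this becomes $(\ln\tilde q)^2\bigl[(\ln\tilde q)^2/(q-1)^4-3/((q-1)(q^3-1))+3/(q^2-1)^2\bigr]$; but $3/(q^2-1)^2-3/((q-1)(q^3-1))=3(q-1)^{-2}\bigl[(q+1)^{-2}-(q^2+q+1)^{-1}\bigr]<0$, while $\tilde q\to 1^-$ (hence $(\ln\tilde q)^2\to 0$) as $\alpha\to 1^-$. Thus for $\alpha$ close to $1$ the displayed expression is negative, the sign analysis of $G(c)$ breaks down, and the conclusion that the maximum occurs at $c=0$ (equivalently, that $F_2$ in (\ref{e8}) is extremal) does not transfer automatically. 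That is precisely why the statement is recorded as a conjecture rather than a theorem.
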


\begin{remark}
Here we remark that the proofs of Conjectures~\ref{sqsa-f} and ~\ref{sqsa-h} will follow in the 
similar manner as the proofs of Theorem~\ref{T3} and Theorem~\ref{T4}, respectively. However, the conjectures are all about to find the extremal functions which we believe to be \eqref{e7} and \eqref{e8}.
\end{remark}

\begin{proof}[\bf Proof of Theorem~\ref{thm2}]
Let $f\in \mathcal{C}_q(\alpha)$, $0\le \alpha<1$. By definition of $\mathcal{C}_q(\alpha)$, $z(D_qf)(z)\in \mathcal{S}_q^*(\alpha)$. Then by \cite[Theorem~1.1]{AS14-2}, we have
$$z\frac{(z(D_qf)(z))'(z)}{z(D_qf)(z)}=1+\int_{|\sigma|=1}\sigma z F_{q,\alpha}^{'}(\sigma z)\rm{d}\mu(\sigma)
$$
or,
$$1+\frac{z(D_qf)'(z)}{(D_qf)(z)}=1+\int_{|\sigma|=1}\sigma z F_{q,\alpha}^{'}(\sigma z)\rm{d}\mu(\sigma),
$$
where $F_{q,\alpha}$ is defined in (\ref{MainThm1:eq1}).
Hence the proof is complete.
\end{proof}

\begin{proof}[\bf Proof of Theorem~\ref{sec2-thm7}]
Let $f(z)=z+\sum_{n=2}^\infty a_n z^n\in \mathcal{C}_q(\alpha)$. By definition of $\mathcal{C}_q(\alpha)$, $z(D_qf)(z)=z+\sum_{n=2}^\infty (1-q^n)/(1-q)a_n z^n\in \mathcal{S}_q^*(\alpha)$. Then by \cite[Theorem~1.3]{AS14-2}, we have
$$\left|\frac{1-q^n}{1-q}a_n\right|\leq c_n.
$$
Next, we show that equality holds for the function $E_{q}\in \mathcal{C}_q(\alpha)$. As a special case to Theorem~\ref{thm2}, when the measure has a unit mass, it is clear that $E_{q}\in \mathcal{C}_q(\alpha)$. Let $E_q(z)=z+\sum_{n=2}^\infty b_n z^n$. From this representation of $E_q$ and the definition of $D_qf$, we get
\begin{equation}\label{e9}
z(D_q E_q)(z)=z+\sum_{n=2}^\infty b_n(1-q^n)/(1-q) z^n.
\end{equation} 
Since $E_q(z)=I_q\{\exp [F_{q,\alpha}(z)]\}$, $z(D_q E_q)(z)=z\{\exp [F_{q,\alpha}(z)]\}$ and since $c_n$ is the $n$-th coefficient of the function $z\exp [F_{q,\alpha}(z)]$, we have
\begin{equation}\label{e10}
z(D_q E_q)(z)=z+\sum_{n=2}^\infty c_n z^n.
\end{equation}
By comparing (\ref{e9}) and (\ref{e10}) we get, $b_n=c_n(1-q)/(1-q^n)$. i.e.
$$E_q(z)=z+\ds \sum_{n=2}^\infty\left(\frac{1-q}{1-q^n}\right) c_n z^n.
$$
This completes the proof of our theorem.
\end{proof}

\end{document}